\renewcommand\eqref[1]{(\ref{#1})} 
\allowdisplaybreaks \numberwithin{equation}{section}
\theoremstyle{plain}
\newtheorem{theorem}{Theorem}[section]
\newtheorem{lemma}[theorem]{Lemma}
\theoremstyle{definition}
\newtheorem{definition}[theorem]{Definition}
\newtheorem{remark}[theorem]{Remark}
\begin{document}

\title[Integro-differential diffusion equations]{Integro-differential diffusion equations on graded Lie groups}

\author[J.E. Restrepo, M. Ruzhansky and B.T. Torebek]{Joel E. Restrepo$^*$, Michael Ruzhansky and Berikbol T. Torebek}
\address{\textcolor[rgb]{0.00,0.00,0.84}{Joel E. Restrepo \newline Department of Mathematics: Analysis,
Logic and Discrete Mathematics \newline Ghent University, Krijgslaan 281, Ghent, Belgium \newline and \newline Department of Mathematics \newline CINVESTAV, Mexico City, Mexico}}
\email{\textcolor[rgb]{0.00,0.00,0.84}{joel.restrepo@cinvestav.mx}}
\address{\textcolor[rgb]{0.00,0.00,0.84}{Michael Ruzhansky \newline Department of Mathematics: Analysis,
Logic and Discrete Mathematics \newline Ghent University, Krijgslaan 281, Ghent, Belgium \newline
 and \newline School of Mathematical Sciences \newline Queen Mary University of London, United Kingdom}}
\email{\textcolor[rgb]{0.00,0.00,0.84}{michael.ruzhansky@ugent.be}}
\address{\textcolor[rgb]{0.00,0.00,0.84}{Berikbol T. Torebek \newline Department of Mathematics: Analysis,
Logic and Discrete Mathematics \newline Ghent University, Krijgslaan 281, Ghent, Belgium \newline and  \newline Institute of
Mathematics and Mathematical Modeling \newline 125 Pushkin str.,
050010 Almaty, Kazakhstan}}
\email{\textcolor[rgb]{0.00,0.00,0.84}{berikbol.torebek@ugent.be}}

\dedicatory{}
\let\thefootnote\relax\footnote{$^{*}$Corresponding author}
\date{\today}
\thanks{This research has been funded by the FWO Odysseus 1 grant G.0H94.18N: Analysis and Partial Differential Equations and by the Methusalem programme of the Ghent University Special Research Fund (BOF) (Grant number 01M01021). MR is also supported by EPSRC grant EP/R003025/2 and EP/V005529/1. BT is also supported by the Science Committee of the Ministry of Education and Science of the Republic of Kazakhstan (Grant No. AP14869090).
\\
The general content of the manuscript, in whole or in part, is not submitted, accepted, or published elsewhere, including conference proceedings.
}

\begin{abstract}
We first study the existence, uniqueness and well-posedness of a general class of integro-differential diffusion equation on $L^p(\mathbb{G})$ $(1<p<+\infty$, $\mathbb{G}$ is a graded Lie group). We show the explicit solution of the considered equation. The equation involves a nonlocal in time operator (with a general kernel) and a positive Rockland operator acting on $\mathbb{G}.$ Also, we provide $L^p(\mathbb{G})-L^q(\mathbb{G})$ $(1<p\leqslant 2\leqslant q<+\infty)$ norm estimates and time decay rate for the solutions. In fact, by using some contemporary results, one can translate the latter regularity problem to the study of boundedness of its propagator which strongly depends on the traces of the spectral projections of the Rockland operator. Moreover, in many cases, we can obtain time asymptotic decay for the solutions which depends intrinsically on the considered kernel. As a complement, we give some norm estimates for the solutions in terms of a homogeneous Sobolev space in $L^{2}(\mathbb{G})$ that involves the Rockland operator. We also give a counterpart of our results in the setting of compact Lie groups. Illustrative examples are also given.
\end{abstract}

\keywords{Graded Lie groups, Rockland operators, partial integro-differential equations, Sonine kernel, time decay rate.} 

\subjclass[2010]{43A85, 43A15, 45K05.}

\maketitle

\tableofcontents

\section{Introduction}

The Abel integral equation with kernel $k(t)=t^{\alpha-1}$ $(0<\alpha<1)$ played a very important role in the construction of the well-known family of Riemann-Liouville fractional integro-differential operators, see e.g. for some historical facts and developments \cite[Chapter 1, Section 2]{samko}. Later, challenges for these type of equations came from the consideration of more general kernels. To the best of our knowledge, the first approach towards solving these equations with general kernels was by Sonine \cite{Sonine}. In the last years, the topic has became more popular and several researchers have been investigating some special classes of these kernels. Let us mention some of the contributions made by Cl\'ement and Nohel \cite{Clement,[50]}, Kochubei \cite{Kochubei}, Luchko and Yamamoto \cite{Luchko1}, Vergara and Zacher \cite{Vergara1}, and others. Notice that from the point of view of evolutionary integral equations involving general classes of kernels, Pr\"{u}ss can serve as a reference with his book \cite{Prus}.

The main purpose of this paper is to study the following integro-differential diffusion equation over the space $L^p(\mathbb{G})$ $(1<p<+\infty$, $\mathbb{G}$ is a graded Lie group):
\begin{equation}\label{1}
\partial_t\left(k\ast\left[u-u_0\right]\right)+\mathcal{R}u=f,\quad 0<t\leqslant T<+\infty,
\end{equation}
with initial data
\begin{equation}\label{2}
u_0\in C(\mathbb{G})\cap L^p(\mathbb{G}),
\end{equation}
where $k$ is a scalar kernel $\neq0$ in $L^1_{loc}(\mathbb{R}_+)$, $\mathcal{R}$ is a positive (unbounded) Rockland operator on $\mathbb{G}$ of homogeneous order $\nu$, $f\in C([0,T],L^p(\mathbb{G}))$ and $(k\ast u)(t)$ denotes the Laplace convolution, i.e.
$$(k\ast u)(t)=\int\limits_0^tk(t-s)u(s){\rm d}s.$$
It is important to mention that we are considering the operator $$\mathcal{R}:\mathcal{D}(\mathbb{G})\subset L^p(\mathbb{G})\to L^p(\mathbb{G}),$$ whose dense (in $L^p(\mathbb{G})$) domain $\mathcal{D}(\mathbb{G})$ is the the space of smooth functions compactly supported in $\mathbb{G},$ see \cite[Subsection 4.3.1]{FR16} for more details and specifically \cite[Theorem 4.3.3]{FR16}.  

Let us now recall the class of kernels that we shall use everywhere in this manuscript. The following class of kernels was introduced in \cite{[50],Clement}: 

We assume that $k\in L^1_{loc}(\mathbb{R}_+),$ is nonnegative and nonincreasing, and there exists a nonnegative kernel $l\in L^1_{loc}(\mathbb{R}_+)$ such that $$(k\ast l)(t)=1,\quad t>0.\eqno(\mathcal{PC})$$
It has to be highlighted that a kernel satisfying condition ($\mathcal{PC}$) was first considered by Sonine in \cite{Sonine}. Consequently, integro-differential operators with kernels $(k, l)\in \mathcal{PC}$ are usually called operators with Sonine kernels.

\textcolor{red}{Note that the model \eqref{1} in $\mathbb{G}=\mathbb{R}^n$ with $\mathcal{R}$ being the Laplacian arises in studying the processes of anomalous and ultraslow diffusions \cite{[30],ultra-2,ultra-1}. From the point of view of applications in physics, researchers have focused on considering the nonlocal in time
reaction-diffusion equations. In some cases, the time-fractional derivative associated to the considered problem describes the process
of anomalous diffusion, dynamic processes in materials with memory, diffusion in
fluids in porous media with memory, etc; we recommend to see the following sources in this direction \cite{book-1,[18],[15]}. More specialized works in mathematics by using this type of kernels can be found e.g. in \cite{[3],[17],[19]}.}

\textcolor{red}{Having in mind the ideas of the last paragraph and the general nature of graded Lie groups, we aim in this work to gather distinct behaviour and objects coming from the time integro-differential operator and the large class of space-operators that can be considered in this type of Lie group (e.g. the Rockland ones). In our context, an important class of problems appear when one deals with hypoelliptic operators $\mathfrak{L}$. Some of the most interesting ones are when $\mathfrak{L}$ is a H\"ormander sum of squares. By Rothschild and Stein \cite{RS}, one is led to considering a lifting of $\mathfrak{L}$ to a sub-Laplacian operator on a general stratified group. While, for higher order operators, one is led to considering Rockland operators.}

\medskip We now start by discussing the solution of problem \eqref{1}-\eqref{2} $(f\equiv0)$ in $L^p(\mathbb{G})$ for $1<p<+\infty$. The general case $(f\neq0)$ will be treated in the main results in  Section \ref{main}. Thus, we alternatively study the equivalent integral equation for \eqref{1}: 
\begin{equation}\label{integral-equivalent}
u(t)+(l\ast \mathcal{R}u)(t)=u_0. 
\end{equation}
For proving that \eqref{integral-equivalent} implies \eqref{1}, we additionally need to assume that $l\in BV_{loc}(\mathbb{R}_+)$ (strongly bounded variation functions). It is important to highlight that everywhere below we will use some results from the book \cite{Prus} where for the space $BV_{loc}(\mathbb{R}_+)$ the author normalized the functions $k$ in this class by assuming $k(0)=0$ and $k(\cdot)$ is left-continuous on $\mathbb{R}_+.$ Nevertheless, in our case, it is not necessary to assume it. Also, we have verified that the additionally conditions are not necessary to use some of the results from \cite{Prus}.

We now recall briefly some important notions about the solution of the integral equation \eqref{integral-equivalent} given by J. Prüss in \cite{Prus}. Here we just adapt the concepts by using the Rockland operator $\mathcal{R}$ in the space $L^p(\mathbb{G}).$ Below, we usually equipped the space $\mathcal{D}(\mathbb{G})$ with the graph norm $\|\cdot\|_{(L^p(\mathbb{G}),\mathcal{R})}$, i.e. $$\|v\|_{(L^p(\mathbb{G}),\mathcal{R})}=\|v\|_{L^p(\mathbb{G})}+\|\mathcal{R}v\|_{L^p(\mathbb{G})}$$ for each $v\in \mathcal{D}(\mathbb{G}).$ By \cite[Theorem 4.4.3]{FR16} this is the Sobolev space $L^p_\nu(\mathbb{G})$, where $\nu$ is the homogeneous order of $\mathcal{R}.$  

\begin{definition}
Let $u$ be a function of the space $C([0,T];L^p(\mathbb{G}))$. It is said that $u$ is:
\begin{enumerate}
\item {\it strong solution} of \eqref{integral-equivalent} on $[0,T]$ if $u\in C([0,T];\mathcal{D}(\mathbb{G}))$ and equation \eqref{integral-equivalent} holds for all $t\in [0,T]$;
\item {\it mild solution} of \eqref{integral-equivalent} on $[0,T]$ if $l\ast u\in C([0,T];\mathcal{D}(\mathbb{G}))$ and 
\[
u(t)=u_0-\mathcal{R}\big(l\ast u\big)(t)
\]
for all $t\in [0,T]$;
\item {\it weak solution} of \eqref{integral-equivalent} on $[0,T]$ if 
\[
\langle u(t),y\rangle =\langle u_0,y\rangle -\langle (l\ast u)(t),\mathcal{R}y\rangle,
\]
for all $t\in [0,T]$ and each $y\in \mathcal{D}(\mathbb{G}).$
\end{enumerate}
\end{definition}
Notice that each strong solution of \eqref{integral-equivalent} is a mild solution, and every mild solution is a weak one. Below we discuss the well-posedness of \eqref{integral-equivalent}, which is an extension of the classical notion of well-posed Cauchy problems.

\begin{definition}
Equation \eqref{integral-equivalent} is said to be well-posed if the following two conditions are satisfied: 
\begin{enumerate}
    \item For each $v\in\mathcal{D}(\mathbb{G})$ there exists a unique strong solution $u_v(t)$ of 
    \[
  u(t)=v+(l\ast \mathcal{R}u)(t),\quad t\geqslant0.  
    \]
    \item For all sequences $\{v_{m}\}_{m\geqslant1}\subset\mathcal{D}(\mathbb{G})$, such that $v_{m}\to0$ as $m\to+\infty$, imply $u_{v_m}(t)\to0$ as $m\to+\infty$ in $L^p(\mathbb{G})$, uniformly on $[0,T].$
\end{enumerate} 
\end{definition}

Assume that \eqref{integral-equivalent} is well-posed, we can then define the solution operator $S(t)$ (we always assume exponentially bounded) for \eqref{integral-equivalent} as follows:
\[
S(t)v=u_v(t),\quad v\in\mathcal{D}(\mathbb{G}),\quad t\geqslant 0.
\]
Without loss of generality, we frequently denote $u_v(t)$ by $u(t).$ It is known that a well-posed problem \eqref{integral-equivalent} admits a solution operator $S(t)$ (also it is called resolvent or propagator), and vice versa \cite[Proposition 1.1]{Prus}. 

By \cite[Theorem 4.2]{Prus} and \cite[Corollary 4.2.9]{FR16}, the integral equation \eqref{integral-equivalent} admits a resolvent in $L^p(\mathbb{G})$ (exponentially bounded) whenever the initial condition $u_0$ is continuous in $L^p(\mathbb{G})$. Therefore, by \cite[Proposition 1.1]{Prus}, equation \eqref{integral-equivalent} is well-posed.   

Since $l\in BV_{loc}(\mathbb{R}_+)$ we have that the resolvent is differentiable (see \cite[Page 34]{Prus}). So, the solution in $L^p(\mathbb{G})$ of problem \eqref{1}-\eqref{2} can be rewritten as $u(t)=S(t)u_0,$ where $S(t)$ is the propagator, see e.g. \cite[Chapter 2]{Prus}. This form of the solution will be useful to calculate the $L^p(\mathbb{G})-L^q(\mathbb{G})$ boundedness of solutions. This will be discussed in detail in Section \ref{main}.

In this work, we use the class of kernels $\mathcal{PC}$ \cite{[50],Clement} which allows us to obtain a nonnegative and nonincreasing solution for the one dimensional case of our considered integro-differential equation. The latter statement will be significant and essential to prove the $L^p(\mathbb{G})-L^q(\mathbb{G})$ norm estimates for the solution of equation \eqref{integral-equivalent}. Indeed, we will see that the regularity is mainly associated with the study of spectral and Fourier (H\"ormander type) multipliers. In general, by using some recent results on Fourier multipliers \cite{RR2020,SRR} that can be applied to obtain spectral multipliers, it can be showed that the norm estimates for the solution of our integro-differential difussion equation \eqref{1} can be reduced to the time asymptotics of its propagator in the noncommutative Lorentz space norm \cite{[51]} (the space is associated with a semifinite von Neumann algebra \cite{von,[46],von2}), which involves calculating the trace of the spectral projections of the Rockland operator $\mathcal{R}$ \cite{david}. Moreover, we are able to establish asymptotic time behavior for the solution of equation \eqref{1} that will depend on the considered kernel. Notice that the first result in this direction, we mean in Lie groups, was given on compact groups in \cite{wagner}, where one considered the particular case of a kernel in the form $k(t)=t^{\alpha-1}$ $(0<\alpha<1)$. Our work somehow extends previous results on $\mathbb{R}^n$ \cite{Vergara1} where the boundedness of the solution of a nonlocal in time diffusion equation was provided in different spaces of $L^p(\mathbb{R}^n)$ $(1<p<+\infty).$ 

\textcolor{red}{It is necessary to mention that all the results established in this paper are new in the context of graded and compact Lie groups. One the one hand, we frequently contrast our results with those already known in the classical model of $\mathbb{G}=\mathbb{R}^n$. In this case, usually, our results coincide with them. On the other hand, we also provide some examples of particular kernels and equations where it seems that the estimates are new in this setting and even in $\mathbb{R}^n,$ see e.g. Section \ref{comparison}.}    

The structure of the paper is a follows: In Section \ref{preliminaries} we recall several notions, definitions and known results about graded Lie groups and the corresponding Fourier analysis, and Von Neumann algebras. These will be the basis to provide the principal statements in the next section. In Section \ref{main}, we establish the main results on the well-posedness in $L^p(\mathbb{G})$ $(1<p<+\infty)$, $L^p(\mathbb{G})-L^q(\mathbb{G})$ $(1<p\leqslant 2\leqslant q<+\infty)$ boundedness and asymptotic time behaviour of the solution for equation \eqref{integral-equivalent}. Here we also provide some norm estimates for the solutions of problem \eqref{1}-\eqref{2} in terms of a homogeneous Sobolev space in $L^{2}(\mathbb{G})$ that involves the Rockland operator. In Section \ref{roc}, examples of Rockland operators are given. Some illustrative examples and comparison results are provided in Section \ref{comparison}.      

\section{Preliminaries}\label{preliminaries}
\medskip Below we collect some basic results about the one dimensional equation \eqref{1} that we need to prove the main results of this paper. Also, we recall some notions and statements about graded Lie groups and the corresponding Fourier analysis. In the last subsection, we provide several definitions on von Neumann algebras and the trace of an operator. The latter information will be an important tool in the analysis of the norm estimates for the solution of problem \eqref{1}-\eqref{2}. In fact, it will be essential to measure the contribution of the Rockland operator in our estimates.    

\subsection{One dimensional integro-differential equation} \label{oned}

Let us consider the following integro-differential equation with Sonine kernel
\begin{equation}\label{1.1}
\partial_t\left(k\ast\left[s_\mu-1\right]\right)(t)+\mu s_\mu(t)=0,\quad t>0,\,\, s_\mu(0)=1,\,\mu>0.
\end{equation}
Note that equation \eqref{1.1} is equivalent to the Volterra equation
\begin{equation}\label{1.2}
s_\mu(t)+\mu(l\ast s_\mu)(t)=1,\quad t\geqslant 0.
\end{equation}
It is known that (see \cite[Prop. 4.5]{Prus}) the assumption $(k, l)\in \mathcal{PC}$ implies that the solution of \eqref{1.1}, i.e. $s_\mu(t)$, it is nonnegative and nonincreasing.

Next we recall some important estimates for the function $s_\mu(t)$ that were proved in \cite[Lemma 6.1]{Vergara1}.

\begin{lemma}\label{lemma-decay} 
Let $(k, l)\in \mathcal{PC}$ and  $\mu>0.$ Then
\begin{equation}\label{1.3}
\frac{1}{1+\mu k(t)^{-1}}\leqslant s_\mu(t)\leqslant \frac{1}{1+\mu(1\ast l)(t)},\quad\textrm{almost all}\quad t>0.\end{equation}
\end{lemma}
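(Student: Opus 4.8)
The plan is to read off both inequalities directly from the Volterra form \eqref{1.2}, namely $s_\mu(t)+\mu(l\ast s_\mu)(t)=1$, combined with the two monotonicity facts already at hand: $s_\mu$ is nonnegative and nonincreasing, while $k$ is nonnegative, nonincreasing and satisfies the Sonine relation $(k\ast l)(t)=1$.

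For the \emph{upper bound} I would exploit the monotonicity of $s_\mu$ inside the convolution against $l$. Writing $(l\ast s_\mu)(t)=\int_0^t l(\sigma)s_\mu(t-\sigma)\,d\sigma$ and using $s_\mu(t-\sigma)\geq s_\mu(t)$ for $\sigma\in[0,t]$ together with $l\geq 0$, one obtains $(l\ast s_\mu)(t)\geq s_\mu(t)(1\ast l)(t)$. Substituting this into \eqref{1.2} gives $1\geq s_\mu(t)\big(1+\mu(1\ast l)(t)\big)$, which is exactly the claimed upper estimate. This step is routine.

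For the \emph{lower bound} the crux is to produce the identity
\[
(k\ast s_\mu')(t)=-\mu\, s_\mu(t).
\]
I would obtain it by convolving \eqref{1.2} with $k$ and invoking $k\ast l=1$, which yields $(k\ast s_\mu)(t)+\mu(1\ast s_\mu)(t)=(1\ast k)(t)$; differentiating in $t$ and using the rule $\frac{d}{dt}(k\ast s_\mu)(t)=k(t)s_\mu(0)+(k\ast s_\mu')(t)$ with $s_\mu(0)=1$ cancels the stray $k(t)$ terms and leaves the displayed identity. Rewriting it with the nonnegative measure $d\alpha:=-ds_\mu$ gives $\int_0^t k(t-\sigma)\,d\alpha(\sigma)=\mu\, s_\mu(t)$. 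Since $k$ is nonincreasing, $k(t-\sigma)\geq k(t)$ for every $\sigma\in[0,t]$, so $\mu\, s_\mu(t)\geq k(t)\int_0^t d\alpha=k(t)\big(s_\mu(0)-s_\mu(t)\big)=k(t)\big(1-s_\mu(t)\big)$. Solving $s_\mu(t)\big(\mu+k(t)\big)\geq k(t)$ for $s_\mu(t)$ produces the lower bound $s_\mu(t)\geq k(t)/(k(t)+\mu)=1/(1+\mu k(t)^{-1})$.

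The hard part is ensuring rigor in the lower-bound step for merely $L^1_{loc}$ kernels, where $s_\mu'$ need not exist pointwise. I would handle this by working throughout with the Lebesgue--Stieltjes measure $-ds_\mu$, which is well defined and nonnegative precisely because $s_\mu$ is nonincreasing, and by justifying the convolved equation $(k\ast s_\mu)(t)+\mu(1\ast s_\mu)(t)=(1\ast k)(t)$ together with its differentiated form through integration by parts for convolutions of functions of bounded variation (equivalently, by approximating $k$ by smooth nonincreasing kernels $k_\varepsilon$, applying the above computation, and passing to the limit via Young's inequality). All monotonicity inputs used here are legitimate, being exactly the conclusions of $(k,l)\in\mathcal{PC}$ recalled before the statement.
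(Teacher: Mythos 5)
The paper does not actually prove this lemma; it quotes it verbatim from \cite[Lemma 6.1]{Vergara1}, and your argument is correct and reconstructs essentially that reference's proof: monotonicity of $s_\mu$ inside the Volterra equation \eqref{1.2} for the upper bound, and the identity $(k\ast s_\mu')(t)=-\mu s_\mu(t)$ (which is just equation \eqref{1.1}) combined with $k(t-\sigma)\geqslant k(t)$ for the lower bound. The regularity issue you flag is also settled by a standard observation: $s_\mu$ is bounded and nonincreasing, hence $BV$, and then $s_\mu=1-\mu\,l\ast s_\mu$ with $l\in L^1_{loc}(\mathbb{R}_+)$ forces $s_\mu$ to be locally absolutely continuous, so $s_\mu'$ exists a.e., the measure $-{\rm d}s_\mu=-s_\mu'\,{\rm d}t$ is nonnegative, and your Stieltjes/approximation argument goes through.
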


\subsection{Graded Lie groups and their Fourier analysis}\label{sub.int.graded}

Now we give some of the basis notions and properties of graded Lie groups that will be implicitly used through the whole paper. Full details can be found e.g. in \cite{FR16,FS82}.

A connected simply connected Lie group $\mathbb{G}$ is called a \textit{graded Lie group} if its Lie algebra $\mathfrak{g}$ can be endowed with a vector space decomposition $\mathfrak{g}= \bigoplus_{i=1}^{\infty}\mathfrak{g}_{i}\,,$ such that all, but finitely many $\mathfrak{g}_{i}$'s, are  $\{0\}$ and $[\mathfrak{g}_{i},\mathfrak{g}_{j}]\subset \mathfrak{g}_{i+j}$.

By $\widehat{\mathbb{G}}$ we denote the unitary dual of $\mathbb{G}$; that is the set of all equivalence classes of irreducible, strongly
continuous and unitary representations of $\mathbb{G}$.

We now introduce the so-called \textit{Rockland operators}. These operators are frequently denoted by $\mathcal{R}$, and they are left-invariant differential operators, that are homogeneous of positive degree \cite[Def. 3.1.15]{FR16}, and satisfy the \textit{Rockland condition} \cite[Def. 4.1.1]{FR16}, hence they are hypoelliptic. 

The \textit{group Fourier transform} is defined on $L^1(\mathbb{G},{\rm d}x)$ by 
\[
 \mathcal{F}_{\mathbb{G}}f(\pi)\equiv \widehat{f}(\pi) \equiv \pi(f):= \int_{\mathbb{G}}f(x)\pi(x)^{*}\,{\rm d}x\,,
\]
where ${\rm d}x$ stands for the (bi-invariant) Haar measure on $\mathbb{G}$. 

The Fourier transform satisfies the following property:  For $f \in \mathcal{S}(\mathbb{G}) \cap L^1(\mathbb{G})$ and $X \in \mathfrak{g}$
\[
 \mathcal{F}_{\mathbb{G}}(X f)(\pi)=\pi(X)\widehat{f}(\pi)\,.
\]
Consequently for the Rockland operator $\mathcal{R}$ on $\mathbb{G}$ we have
\[
 \mathcal{F}_{\mathbb{G}}(\mathcal{R} f)(\pi)=\pi(\mathcal{R})\widehat{f}(\pi)\,,
\]
and by using the infinitesimal representation of $\pi(\mathcal{R})$ (see \cite{HJL85}), the operator has the following matrix representation 
\begin{equation}
\label{matrix.repr}
\mathcal{F}_{\mathbb{G}}(\mathcal{R} f)(\pi)=\left\{ \pi_{i}^{2}\cdot \widehat{f}(\pi)_{i,j}\right\}_{i,j \in \mathbb{N}}\,.
\end{equation}
The orbit method (see e.g. \cite{CG90,Kir04}) gives us the possibility to describe $\widehat{\mathbb{G}}$ as the subset of some Euclidean space. This allows to equip $\widehat{\mathbb{G}}$ with a concrete measure, called in the literature the \textit{Plancherel measure} usually denoted by $\mu$. On the other hand for $f \in L^1(\mathbb{G}) \cap L^2(\mathbb{G})$, and for $x \in \mathbb{G}$, the operators $\pi(f)\pi(x)$, $\pi(x)\pi(f)$ and $\pi(f)$ are (under the same equivalent class $\pi$) trace class and Hilbert--Schmidt, respectively, and integrable against $\mu$. Under these considerations we have the isometry, known as the \textit{Plancherel formula}
\begin{equation}\label{plan.gr}\begin{split}
\int_{\mathbb{G}} |f(x)|^2\,{\rm d}x=\int_{\widehat{\mathbb{G}}}\textnormal{Tr}(\pi(f)\pi(f)^{*})\,{\rm d}\mu(\pi)=\int_{\widehat{{\mathbb{G}}}}\|\pi(f)\|^{2}_{\textnormal{HS}(\mathcal{H}_{\pi})}\,{\rm d}\mu(\pi)\,,\end{split}
\end{equation}
while any $f \in \mathcal{S}(\mathbb{G})$ may be recovered via the \textit{Fourier inversion formula} given by 
\begin{equation}\label{Four.inv.for}\begin{split}
f(x)=\int_{\widehat{\mathbb{G}}}\textnormal{Tr}(\pi(x)\pi(f))\,{\rm d}\mu(\pi)=\int_{\widehat{\mathbb{G}}}\textnormal{Tr}(\pi(f)\pi(x))\,{\rm d}\mu(\pi)\,.\end{split}
\end{equation}
It is important to mention that the orbit methods and its consequences hold true in the more general setting of a connected simply connected nilpotent Lie group.

\subsection{Von Neumann algebras and other definitions}\label{preli}

Let $\mathfrak{L}(\mathcal{H})$ be the set of linear operators defined on a Hilbert space $\mathcal{H}$. In this context, the concept of $\tau$-measurability on a von Neumann algebra $M$ (see e.g. \cite{von,[46],[47],von2}) and the spectral projections give us the possibility to approximate unbounded operators by bounded ones. 

For our analysis it is enough to set $M$ to be the right group von Neumann algebra $VN_R(\mathbb{G})$ where $\mathbb{G}$ is a graded Lie group.    

The group von Neumann algebra $VN_R(\mathbb{G})$ is generated by all the right actions of $\mathbb{G}$ on $L^2(\mathbb{G})$ ($\pi_R(g)f(x)=f(xg)$ with $g\in \mathbb{G}$), i.e. $$VN_R(\mathbb{G})=\{\pi_R(\mathbb{G})\}^{!!}_{g\in \mathbb{G}},$$ where $!!$ is the bicommutant of the subalgebra $\{\pi_R(g)\}_{g\in \mathbb{G}}\subset \mathfrak{L}(L^2(\mathbb{G}))$. This follows from \cite{von} since $$VN_R(\mathbb{G})^{!}=VN_L(\mathbb{G})\,\,\, \text{and}\,\,\, VN_L(\mathbb{G})^{!}=VN_R(\mathbb{G}),$$ where the symbol $!$ represents the commutant of the group. 

We now recall an important definition that will be used frequently in the development of this paper.   
\begin{definition}
Let $M$ be a von Neumann algebra. A trace on the positive part $M_+=\{L\in M: L^*=L>0\}$ of $M$ is a functional defined on $M_+$, taking non-negative, possibly infinite, real values, with the following properties:
\begin{enumerate}
    \item If $L\in M_+$ and $T\in M_+$ then $\tau(L+T)=\tau(L)+\tau(T)$;
    \item If $L\in M_+$ and $\gamma\in\mathbb{R}^{+}$ then $\tau(\gamma L)=\gamma\tau(L)$ (with $0\cdot+\infty=0$);
    \item If $L\in M_+$ and $U$ is an unitary operator on $M$ then $\tau(ULU^{-1})=\tau(L).$
\end{enumerate}
\end{definition}

\section{Main results}\label{main}

In this section, we present the principal results of the paper. Mainly, we analyze  the integral equation \eqref{integral-equivalent} (even when $f\neq0$) instead of problem \eqref{1}-\eqref{2} since they are equivalent under the considered kernels. We first establish the well-posedness in $L^p(\mathbb{G})$ $(1<p<+\infty)$ of \eqref{integral-equivalent} and show the explicit form of the solution. Also, we complement the study by proving that the solution is bounded in $L^q(\mathbb{G})$ with data in $L^p(\mathbb{G})$ for $1<p\leqslant2\leqslant q<+\infty.$ Moreover, we give the asymptotic behaviour of this solution. We additionally provide some norm estimates for the solution of problem \eqref{1}-\eqref{2} in a homogeneous Sobolev space of $L^2(\mathbb{G}).$       

\subsection{Well-posedness and $L^p-L^q$ estimates}
\begin{theorem}\label{integral-thm}
Let $(k,l)\in \mathcal{PC}$ be such that $l$ is positive and $l\in BV_{loc}(\mathbb{R}_+).$ Let $\mathbb{G}$ be a graded Lie group of homogeneous dimension $Q$. Let $\mathcal{R}$ be a positive Rockland operator of homogeneous degree $\nu$ on $\mathbb{G}$. If $u_0$ is a continuous function in $L^p(\mathbb{G})$ $(1<p<+\infty)$ then the integral equation \eqref{integral-equivalent} is well-posed. Also, the explicit solution can be written as follows:
\[
u(t,x)=\int_{\widehat{\mathbb{G}}}\textnormal{Tr}[\pi(x)\overline{s}_{\pi}(t)\widehat{u_0}(\pi)]\,{\rm d}\mu(\pi),\quad x\in\mathbb{G},
\]
where $\overline{s}_{\pi}(t)$ is defined in the proof.

Moreover, we get the following time decay rate for the solution in $L^q(\mathbb{G})$ $(2\leqslant q<+\infty)$ with any datum in $L^p(\mathbb{G})$ $(1<p\leqslant 2):$ 
\begin{equation}\label{lp}
\|u(t)\|_{L^q(\mathbb{G})}\lesssim \bigg((1\ast l)(t)\bigg)^{-Q/\nu\left(\frac{1}{p}-\frac{1}{q}\right)}\|u_0\|_{L^p(\mathbb{G})},\quad \frac{\nu}{Q}\geqslant\frac{1}{p}-\frac{1}{q}.    
\end{equation}
\end{theorem}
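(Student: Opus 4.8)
The plan is to treat the three assertions in turn: well-posedness, the closed form of the solution, and the decay estimate. For well-posedness I would simply assemble the pieces already laid out before the statement. Since $\mathcal{R}$ is a positive Rockland operator, $-\mathcal{R}$ generates a bounded analytic semigroup on $L^p(\mathbb{G})$ by \cite[Corollary 4.2.9]{FR16}; feeding this into the Volterra theory of \cite[Theorem 4.2]{Prus} produces an exponentially bounded resolvent $S(t)$ for \eqref{integral-equivalent} whenever $u_0$ is continuous in $L^p(\mathbb{G})$, and then \cite[Proposition 1.1]{Prus} yields well-posedness. The hypothesis $l\in BV_{loc}(\mathbb{R}_+)$ guarantees that $S(t)$ is differentiable, so that $u(t)=S(t)u_0$ solves \eqref{1}--\eqref{2} as well.

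For the explicit formula I would diagonalise $\mathcal{R}$ by the group Fourier transform. Applying $\mathcal{F}_{\mathbb{G}}$ to \eqref{integral-equivalent} and using \eqref{matrix.repr}, for each $\pi\in\widehat{\mathbb{G}}$ the equation becomes $\widehat{u}(t,\pi)+\bigl(l\ast \pi(\mathcal{R})\widehat{u}(\cdot,\pi)\bigr)(t)=\widehat{u_0}(\pi)$, an operator-valued Volterra equation in which $\pi(\mathcal{R})$ acts diagonally with nonnegative entries. On the spectral subspace of $\pi(\mathcal{R})$ associated with an eigenvalue $\mu>0$ this is exactly the scalar equation \eqref{1.2}, whose solution is $s_\mu(t)$; hence by the functional calculus for the positive self-adjoint operator $\pi(\mathcal{R})$ one obtains $\widehat{u}(t,\pi)=\overline{s}_\pi(t)\,\widehat{u_0}(\pi)$, where $\overline{s}_\pi(t):=[\,s_{(\cdot)}(t)\,](\pi(\mathcal{R}))$ is the (diagonal) operator obtained by substituting $\pi(\mathcal{R})$ into $\mu\mapsto s_\mu(t)$. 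The Fourier inversion formula \eqref{Four.inv.for} then gives the stated representation $u(t,x)=\int_{\widehat{\mathbb{G}}}\textnormal{Tr}[\pi(x)\overline{s}_\pi(t)\widehat{u_0}(\pi)]\,{\rm d}\mu(\pi)$.

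For the decay estimate \eqref{lp} I would read the propagator as the spectral multiplier $\phi_t(\mathcal{R})$ of $\mathcal{R}$ with $\phi_t(\lambda)=s_\lambda(t)$, and invoke the noncommutative H\"ormander-type theorem of \cite{RR2020,SRR}: for $1<p\leqslant2\leqslant q<+\infty$ and $\tfrac1r=\tfrac1p-\tfrac1q$ one has $\|\phi_t(\mathcal{R})\|_{L^p(\mathbb{G})\to L^q(\mathbb{G})}\lesssim\|\phi_t(\mathcal{R})\|_{L^{r,\infty}(VN_R(\mathbb{G}))}$, the norm on the right being the Lorentz norm \cite{[51]}. Writing $E_{[0,\lambda)}$ for the spectral projection of $\mathcal{R}$ and $d(\sigma)=\tau\bigl(\mathbf{1}_{\{\phi_t(\mathcal{R})>\sigma\}}\bigr)$ for the distribution function, this Lorentz norm equals $\sup_{\sigma>0}\sigma\,d(\sigma)^{1/r}$. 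Since $\phi_t$ is nonincreasing, the upper bound $s_\lambda(t)\leqslant\bigl(1+\lambda(1\ast l)(t)\bigr)^{-1}$ from Lemma \ref{lemma-decay} gives $\{\phi_t(\mathcal{R})>\sigma\}\subseteq\bigl\{\mathcal{R}<\lambda_0\bigr\}$ with $\lambda_0=\tfrac{1-\sigma}{\sigma\,(1\ast l)(t)}$, whence $d(\sigma)\leqslant\tau(E_{[0,\lambda_0)})$. The homogeneity of $\mathcal{R}$ under the dilations of $\mathbb{G}$, together with the finiteness of the trace of the spectral projections established in \cite{david}, yields the Weyl-type scaling $\tau(E_{[0,\lambda)})=c\,\lambda^{Q/\nu}$. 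Substituting, the Lorentz norm is controlled by $(1\ast l)(t)^{-\frac{Q}{\nu}(\frac1p-\frac1q)}\,\sup_{0<\sigma<1}\sigma^{\,1-\frac{Q}{\nu r}}(1-\sigma)^{\frac{Q}{\nu r}}$, and the supremum is finite precisely because the hypothesis $\frac{\nu}{Q}\geqslant\frac1p-\frac1q$ forces $\frac{Q}{\nu r}\leqslant1$. This produces \eqref{lp}.

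The main obstacle is the last paragraph: one must justify that the propagator is genuinely the spectral multiplier $\phi_t(\mathcal{R})$ in the sense required by \cite{RR2020,SRR}, and that the trace of the spectral projections of $\mathcal{R}$ is finite and scales homogeneously as $\lambda^{Q/\nu}$. The scaling is morally forced by $\mathcal{R}$ being homogeneous of degree $\nu$ on a group of homogeneous dimension $Q$, but turning this into the clean identity $\tau(E_{[0,\lambda)})=c\,\lambda^{Q/\nu}$ with a finite constant, and then tracking the constants through the passage to the Lorentz space, is the delicate analytic point. The endpoint $\frac{\nu}{Q}=\frac1p-\frac1q$ sits exactly at the boundary $\frac{Q}{\nu r}=1$, where the supremum over $\sigma$ still remains finite, so the estimate persists up to and including it.
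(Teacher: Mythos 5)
Your proposal is correct and follows essentially the same route as the paper: well-posedness via \cite[Theorem 4.2]{Prus}, \cite[Proposition 1.1]{Prus} and \cite[Corollary 4.2.9]{FR16}, the explicit formula via the group Fourier transform and the scalar relaxation function $s_\mu(t)$, and the decay estimate via the $L^p$--$L^q$ multiplier theorems of \cite{RR2020,SRR} combined with Lemma \ref{lemma-decay} and the trace bound $\tau\big(E_{(0,v)}(\mathcal{R})\big)\lesssim v^{Q/\nu}$ from \cite{david}. The only cosmetic difference is that you evaluate the weak-Lorentz norm of the propagator through its distribution function in $\sigma$, whereas the paper optimizes directly in the spectral parameter via $\sup_{v>0}\frac{v^{Q/(\nu r)}}{1+v(1\ast l)(t)}$; the two computations are equivalent and yield the same endpoint behaviour.
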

\begin{proof}
We first start by showing the explicit solution of problem \eqref{1}-\eqref{2} $(f\equiv0)$ by using the group Fourier transform. 

Hence, let us now apply the group Fourier transform in equation \eqref{1} $(f\equiv0)$ with respect to the variable $x$ for all $\pi\in\widehat{\mathbb{G}}$ and then
\begin{align}\label{fouriere}
\begin{split}
\partial_t\left(k\ast\left[\widehat{u}(t,\pi)-\widehat{u_0}(\pi)\right]\right)+\pi(\mathcal{R})\widehat{u}(t,\pi)=0, \quad \widehat{u}(0,\pi)=\widehat{u_0}(\pi);\quad t>0.
\end{split}
\end{align}
Taking into account the infinitesimal representation of $\pi(\mathcal{R})$, the functional calculus allows the latter equation to be seen  componentwise as an infinite system of equations of the form
\begin{equation}\label{diagon}
\begin{split}
\partial_t\left(k\ast\left[\widehat{u}(t,\pi)_{ij}-\widehat{u_0}(\pi)_{ij}\right]\right)+\pi_{i}^{2}\widehat{u}(t,\pi)_{ij}=0,\quad \widehat{u}(0,\pi)_{ij}=\widehat{u_0}(\pi)_{ij};\quad t>0.
\end{split}
\end{equation}
where we are considering any $i,j\in\mathbb{N}$ and any $\pi\in \widehat{\mathbb{G}}.$ To solve the system  given by the infinite matrix equation \eqref{diagon}, we decouple the system by fixing an index $(i,j)$. Then each equation given by \eqref{diagon} consists of functions only in the time-variable $t$ and will be treated independently. Thus, normalizing equation \eqref{diagon} for fixed $(i,j)$ and taking into account that $(k,l)\in \mathcal{PC}$ we have that problem \eqref{diagon} has a nonnegative and nonincreasing solution (see Subsection \ref{oned}), which we denote by $\overline{s_{ij}}_{\pi_i^2}(t)$, and after making it into an infinite matrix with respect to $i,j$, we denote it by $\overline{s}_{\pi}(t)$. Therefore, one has $$\widehat{u}(t,\pi)=\overline{s}_{\pi}(t)\widehat{u_0}(\pi).$$ Now by the application of the inverse Fourier transform and \eqref{Four.inv.for} we arrive at  
\[
u(t,x)=\int_{\widehat{\mathbb{G}}}\textnormal{Tr}[\pi(x)\overline{s}_{\pi}(t)\widehat{u_0}(\pi)]\,{\rm d}\mu(\pi).
\]
On the other hand, we already know that equation \eqref{integral-equivalent} is well-posed in $L^p(\mathbb{G})$ and the solution can be given by  $u(t)=S(t)u_0$ (see the introduction for all details). We note that then by continuity the solution operator $u(t)=S(t)u_0$ to \eqref{integral-equivalent} can be extended continuously to the whole of $L^p(\mathbb{G})$, with values in $L^q(\mathbb{G}).$ Indeed, from \cite[Theorem 5.1]{RR2020}, \cite[Theorem 1.1]{SRR} and Lemma \ref{lemma-decay} it follows that
\begin{align}\label{volterra-sol}
\|u(t)\|_{L^q(\mathbb{G})}&\leqslant C\|u_0\|_{L^p(\mathbb{G})}\|S(t)\|_{L^{r,\infty}(VN_R(\mathbb{G}))} \nonumber\\
&\leqslant  C\|u_0\|_{L^p(\mathbb{G})}\sup_{v>0}\frac{1}{1+v(1\ast l)(t)}\big[\tau(E_{(0,v)}(\mathcal{R}))\big]^{\frac{1}{r}},\quad \frac{1}{r}=\frac{1}{p}
-\frac{1}{q}.
\end{align}
Now, it was shown in \cite[Theorem 8.2]{david} that 
\[
\tau\big(E_{(0,v)}(\mathcal{R})\big)\lesssim v^{Q/\nu},\quad v\to+\infty.
\]
Moreover, in this case the estimate is sharp and we can then claim the sharpness of the time-decay rate. So, by plugging the latter estimate in \eqref{volterra-sol} it yields
\begin{align*}
\|u(t)\|_{L^q(\mathbb{G})}&\leqslant C\|u_0\|_{L^p(\mathbb{G})}\sup_{v>0}\frac{v^{\lambda/r}}{1+v(1\ast l)(t)}
\\&\leqslant C\bigg((1\ast l)(t)\bigg)^{-\lambda/r}\|u_0\|_{L^p(\mathbb{G})},\,\,\lambda=Q/\nu, 
\end{align*}
since the supremum is attained at $v=\frac{\lambda}{(r-\lambda)(1\ast l)(t)}$ whenever $\frac{1}{\lambda}>\frac{1}{p}-\frac{1}{q}.$ Note now that in the case $\frac{1}{\lambda}=\frac{1}{p}-\frac{1}{q}=\frac{1}{r},$ we get straightforward the following decay:
\[
\sup_{v>0}\frac{v^{\lambda/r}}{1+v(1\ast l)(t)}=\sup_{v>0}\frac{v}{1+v(1\ast l)(t)}\leqslant \bigg((1\ast l)(t)\bigg)^{-1}, 
\]
completing the proof.
\end{proof}
    
\begin{remark}
Below we establish a more general result than Theorem \ref{integral-thm} but we need to restrict the source function to be in a specific Sobolev space. We also do not use the Fourier analysis on the group. 
\end{remark}  

First we need to recall the  Sobolev space $W^{1,1}([0,T];L^p(\mathbb{G}))$ $(1<p\leqslant 2)$ that has all functions $g:[0,T]\to L^p(\mathbb{G})$ such that the distributional derivatives satisfy $D^{\beta}g\in L^1([0,T];L^p(\mathbb{G}))$ for orders $|\beta|\leqslant 1.$

As usual in our analysis, we work with the equivalent integral equation of problem \eqref{1}-\eqref{2}. Hence, notice that equation \eqref{1} is equivalent to
\[
(k\ast u^{\prime})(t)+\mathcal{R}u(t)=f(t),\quad t>0,\,x\in\mathbb{G},
\]
hence by using the Sonine condition we have 
\begin{equation}\label{integral-equivalent-gen}
u(t)-u_0+(l\ast\mathcal{R}u)(t)=(l\ast f)(t).    
\end{equation}
By \cite[Theorem 4.2]{Prus} and \cite[Corollary 4.2.9]{FR16}, the above equation admits a resolvent in $L^p(\mathbb{G})$ (exponentially bounded). Thus, equation \eqref{integral-equivalent-gen} is well-posed \cite[Proposition 1.1]{Prus}.

\begin{theorem}\label{integral-thm-1}
Let $(k,l)\in \mathcal{PC}$ be such that $l$ is positive and $l\in BV_{loc}(\mathbb{R}_+).$ Let $\mathbb{G}$ be a graded Lie group of homogeneous dimension $Q$. Let $\mathcal{R}$ be a positive Rockland operator of homogeneous degree $\nu$ on $\mathbb{G}$. If $u_0$ is a continuous function in $L^p(\mathbb{G})$ $(1<p<+\infty)$, $f\in C([0,T];L^p(\mathbb{G}))$ and $l\ast f\in C([0,T];L^p(\mathbb{G}))$ then the integral equation \eqref{integral-equivalent-gen} is well-posed. Also, if $u_0\in L^p(\mathbb{G})$ $(1<p\leqslant 2)$ and $l\ast f\in W^{1,1}([0,T];L^p(\mathbb{G}))$ then the following mild solution of \eqref{integral-equivalent-gen}
\[
u(t)=S(t)u_0+\int_0^t S(t-r)(l\ast f)^{\prime}(s){\rm d}s
\]
is in 
$L^q(\mathbb{G})$ $(2\leqslant q<+\infty).$ Moreover, we get the following time-rate for the latter solution: 
\begin{align*}
\|u(t)\|_{L^q(\mathbb{G})}\lesssim &\bigg((1\ast l)(t)\bigg)^{-Q/\nu\left(\frac{1}{p}-\frac{1}{q}\right)}\|u_0\|_{L^p(\mathbb{G})} \\
&+\int_0^t\bigg((1\ast l)(s)\bigg)^{-Q/\nu\left(\frac{1}{p}-\frac{1}{q}\right)}\|(l\ast f^{\prime})(s)\|_{L^p(\mathbb{G})}{\rm d}s,\quad \frac{\nu}{Q}\geqslant\frac{1}{p}-\frac{1}{q}.   \end{align*}
\end{theorem}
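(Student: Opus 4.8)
The plan is to build on Theorem \ref{integral-thm} by reducing the inhomogeneous problem to the homogeneous one via the variation-of-parameters (Duhamel) formula. First I would establish well-posedness of \eqref{integral-equivalent-gen}: since \cite[Theorem 4.2]{Prus} combined with \cite[Corollary 4.2.9]{FR16} guarantees an exponentially bounded resolvent $S(t)$ in $L^p(\mathbb{G})$ whenever $u_0$ is continuous and $l\ast f\in C([0,T];L^p(\mathbb{G}))$, well-posedness follows directly from \cite[Proposition 1.1]{Prus}, exactly as recorded in the paragraph preceding the statement. The main content is therefore the $L^p$--$L^q$ mapping property and the time-decay rate for the mild solution.

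The key step is to verify that the proposed formula
\[
u(t)=S(t)u_0+\int_0^t S(t-s)(l\ast f)^{\prime}(s)\,{\rm d}s
\]
indeed solves \eqref{integral-equivalent-gen} and then to estimate each term separately. For the first term, $\|S(t)u_0\|_{L^q(\mathbb{G})}$ obeys precisely the decay \eqref{lp} already proved in Theorem \ref{integral-thm}, so it contributes the leading $((1\ast l)(t))^{-Q/\nu(1/p-1/q)}\|u_0\|_{L^p(\mathbb{G})}$ bound. For the integral term, I would bring the $L^q$-norm inside the integral by Minkowski's integral inequality:
\[
\left\|\int_0^t S(t-s)(l\ast f)^{\prime}(s)\,{\rm d}s\right\|_{L^q(\mathbb{G})}\leqslant \int_0^t \big\|S(t-s)(l\ast f)^{\prime}(s)\big\|_{L^q(\mathbb{G})}\,{\rm d}s,
\]
and then apply the same $L^p$--$L^q$ estimate \eqref{lp} to the propagator acting on the $L^p(\mathbb{G})$-valued function $(l\ast f)^{\prime}(s)=(l\ast f^{\prime})(s)$. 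The hypothesis $l\ast f\in W^{1,1}([0,T];L^p(\mathbb{G}))$ is exactly what makes the integrand $(l\ast f)^{\prime}(s)$ a well-defined element of $L^1([0,T];L^p(\mathbb{G}))$, so that the Duhamel integral is meaningful and the resulting estimate is finite.

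A technical subtlety I would need to handle is the precise decay exponent appearing on the convolution term: applying \eqref{lp} to $S(t-s)$ would naturally produce a factor $((1\ast l)(t-s))^{-Q/\nu(1/p-1/q)}$, whereas the stated result carries $((1\ast l)(s))^{-Q/\nu(1/p-1/q)}$. I expect this to be the main obstacle, and the way I would resolve it is to exploit that $1\ast l$ is nondecreasing (as $l\geqslant 0$), together with the nonincreasing monotonicity of the one-dimensional solution $s_\mu$ from Subsection \ref{oned}, to dominate the kernel by a quantity controlled by $(1\ast l)(s)$; essentially the propagator's mass is governed by Lemma \ref{lemma-decay}, and the change of variable $s\mapsto t-s$ inside the integral, combined with monotonicity, converts the argument as needed. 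The remaining work is routine: one verifies the mild solution satisfies \eqref{integral-equivalent-gen} by substituting the Duhamel formula and using the resolvent identity for $S(t)$ (differentiability of the resolvent, guaranteed by $l\in BV_{loc}(\mathbb{R}_+)$, permits the interchange of $\mathcal{R}$ and the time integral), and then assembles the two bounds to obtain the displayed estimate.
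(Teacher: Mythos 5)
Your overall route is the same as the paper's: well-posedness from \cite[Theorem 4.2]{Prus}, \cite[Corollary 4.2.9]{FR16} and \cite[Proposition 1.1]{Prus}; the Duhamel formula (the paper invokes \cite[Proposition 1.2, item (ii)]{Prus} rather than verifying it by hand, an immaterial difference); then term-by-term estimates of $S(t)u_0$ and of the convolution term via the $L^{r,\infty}(VN_R(\mathbb{G}))$ propagator bounds of \cite{RR2020,SRR}, Lemma \ref{lemma-decay}, and the trace asymptotics $\tau\big(E_{(0,v)}(\mathcal{R})\big)\lesssim v^{Q/\nu}$ of \cite{david}. The genuine problem is precisely the step you single out as the main obstacle: your monotonicity argument does not close it. Writing $\gamma:=\frac{Q}{\nu}\left(\frac{1}{p}-\frac{1}{q}\right)$, what Minkowski plus the propagator bound actually yields is
\[
\int_0^t \big((1\ast l)(t-s)\big)^{-\gamma}\,\|(l\ast f)^{\prime}(s)\|_{L^p(\mathbb{G})}\,{\rm d}s .
\]
Monotonicity of $1\ast l$ gives $(1\ast l)(t-s)\leqslant (1\ast l)(t)$, i.e. an inequality for the reciprocals in the useless direction; and no constant $C$ can give $\big((1\ast l)(t-s)\big)^{-\gamma}\leqslant C\big((1\ast l)(s)\big)^{-\gamma}$ on all of $(0,t)$, since as $s\uparrow t$ the left-hand side blows up (because $(1\ast l)(0^+)=0$) while the right-hand side stays bounded. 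The monotonicity of $s_\mu$ helps only on $[0,t/2]$, where $t-s\geqslant s$. The substitution $s\mapsto t-s$ does transfer the weight to $(1\ast l)(s)$, but only at the price of evaluating the source at $t-s$, producing $\int_0^t \big((1\ast l)(s)\big)^{-\gamma}\|(l\ast f)^{\prime}(t-s)\|_{L^p(\mathbb{G})}\,{\rm d}s$; you cannot have both the weight and the source evaluated at $s$. So the displayed estimate of the theorem, as literally written, is not reachable by the argument you describe.

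To be fair, the paper's own proof elides exactly this point: it bounds $\|S(t-r)\|_{L^{r,\infty}(VN_R(\mathbb{G}))}$ by $\sup_{v>0}\big[\tau(E_{(0,v)}(\mathcal{R}))\big]^{\frac{1}{r}}\big(1+v(1\ast l)(s)\big)^{-1}$, silently identifying the time variables $t-r$ and $s$, and it likewise interchanges $(l\ast f)^{\prime}$ and $l\ast f^{\prime}$ (these coincide only when $f(0)=0$, since $(l\ast f)^{\prime}(s)=l(s)f(0)+(l\ast f^{\prime})(s)$). So your diagnosis of where the difficulty sits is accurate, and your argument is sound up to that point; but the honest conclusion of this method is the estimate with the weight $\big((1\ast l)(t-s)\big)^{-\gamma}$ paired with $\|(l\ast f)^{\prime}(s)\|_{L^p(\mathbb{G})}$ (equivalently, after substitution, the weight at $s$ paired with the source at $t-s$), not the form stated in the theorem. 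A correct write-up should either prove that corrected form, or supply an additional argument (or hypotheses on $f$) under which the two integrals are comparable; monotonicity alone cannot do it.
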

\begin{remark}
We should note that if $(l\ast f)^{\prime}\in L^{\infty}([0,T];L^p(\mathbb{G}))$ then 
\begin{align*}
\|u(t)\|_{L^q(\mathbb{G})}\lesssim &\bigg((1\ast l)(t)\bigg)^{-Q/\nu\left(\frac{1}{p}-\frac{1}{q}\right)}\|u_0\|_{L^p(\mathbb{G})} \\
&+\sup_{s\in [0,T]}\|(l\ast f^{\prime})(s)\|_{L^p(\mathbb{G})}\int_0^t\bigg((1\ast l)(s)\bigg)^{-Q/\nu\left(\frac{1}{p}-\frac{1}{q}\right)}{\rm d}s,\quad \frac{\nu}{Q}\geqslant\frac{1}{p}-\frac{1}{q}.   \end{align*}
\end{remark}
\begin{proof}[Proof of Theorem \ref{integral-thm-1}]
By \cite[Proposition 1.2, item (ii)]{Prus} and condition 

$l\ast f\in W^{1,1}([0,T];L^p(\mathbb{G}))$ we obtain that the solution of \eqref{integral-equivalent-gen} is given as
\[
u(t)=S(t)u_0+\int_0^t S(t-r)(l\ast f)^{\prime}(s){\rm d}s.
\]
From \cite[Theorem 5.1]{RR2020}, \cite[Theorem 1.1]{SRR} and Lemma \ref{lemma-decay} it follows that
\begin{align}\label{volterra-sol}
&\|u(t)\|_{L^q(\mathbb{G})}
 \nonumber\\ &\leqslant C\left(\|u_0\|_{L^p(\mathbb{G})}\|S(t)\|_{L^{r,\infty}(VN_R(\mathbb{G}))}+\int_0^t \|S(t-r)\|_{L^{r,\infty}(VN_R(\mathbb{G}))}\|(l\ast f)^{\prime}(s)\|_{L^p(\mathbb{G})}{\rm d}s \right)\nonumber\\
&\leqslant  C\bigg(\|u_0\|_{L^p(\mathbb{G})}\sup_{v>0}\frac{\big[\tau(E_{(0,v)}(\mathcal{R}))\big]^{\frac{1}{r}}}{1+v(1\ast l)(t)}+\int_0^t \|(l\ast f)^{\prime}(s)\|_{L^p(\mathbb{G})}\sup_{v>0}\frac{\big[\tau(E_{(0,v)}(\mathcal{R}))\big]^{\frac{1}{r}}}{1+v(1\ast l)(s)}{\rm d}s\bigg),\quad\nonumber
\end{align}
where $\frac{1}{r}=\frac{1}{p}
-\frac{1}{q}.$ Now we just follow the same lines from equation \eqref{volterra-sol} of the proof on Theorem \ref{integral-thm}. The proof is complete.
\end{proof}

\subsection{Estimates in a homogeneous Sobolev space}

Notice that in the last two theorems we were not able to gain any decay in the $L^2-$norm. This mainly happens since for the trivial representation on $\widehat{\mathbb{G}}$ the spectrum comes to $0$. Now we recall a homogeneous Sololev space that involves the Rockland operator. In this space we can avoid somehow the contribution of the trivial representation and gain some time-decay in problem \eqref{1}-\eqref{2}. For a detailed discussion on the Sobolev spaces in the setting of a graded Lie group, see \cite[Subsection 4.4.1]{FR16} and \cite{FR17}. 

Let $\mathcal{R}$ be a positive Rockland operator of homogeneous degree $\nu$, and let $s\in\mathbb{R}$. The \textit{homogeneous Sobolev space $\dot{L}^{2}_{s}(\mathbb{G})$} is the subspace of tempered distributions $\mathcal{S}'(\mathbb{G})$ obtained by the completion of $ \mathcal{S}(\mathbb{G})$ with respect to the (homogeneous) \textit{Sobolev norm}
\[
\|u\|_{\dot{L}^{2}_{s}(\mathbb{G})}:=\|\mathcal{R}_{2}^{\frac{s}{\nu}}u\|_{L^2(\mathbb{G})}\,,\quad u \in \mathcal{S}(\mathbb{G}),
\]
where the operator $\mathcal{R}_2$ stands for the self-adjoint extension of $\mathcal{R}$ on $L^2(\mathbb{G})$.

\begin{theorem}\label{integral-thm-sobolev}
Let $(k,l)\in \mathcal{PC}$ be such that $l$ is positive and $l\in BV_{loc}(\mathbb{R}_+).$ Let $\mathbb{G}$ be a graded Lie group of homogeneous dimension $Q$. Let $\mathcal{R}$ be a positive Rockland operator of homogeneous degree $\nu$ on $\mathbb{G}$. For any $s\geqslant v$, if $u_0\in \dot{L}^2_{s-\nu}(\mathbb{G})$ then the solution of integro-differential difussion equation \eqref{1}-\eqref{2} $(f\equiv0)$ belongs to $\dot{L}^2_{s}(\mathbb{G})$ and satisfies
\[
\|u(t)\|_{\dot{L}^2_{s}(\mathbb{G})}\lesssim \bigg((1\ast l)(t)\bigg)^{-1}\|u_0\|_{\dot{L}^2_{s-\nu}(\mathbb{G})}.    
\]
\end{theorem}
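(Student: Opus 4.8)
The plan is to feed the explicit Fourier-side solution from the proof of Theorem \ref{integral-thm} into the Plancherel formula \eqref{plan.gr}, now applied to the weighted object $\mathcal{R}_2^{s/\nu}u(t)$ rather than to $u(t)$ itself. Recalling that $\|u(t)\|_{\dot L^2_s(\mathbb{G})}=\|\mathcal{R}_2^{s/\nu}u(t)\|_{L^2(\mathbb{G})}$ and that $\mathcal{F}_{\mathbb{G}}(\mathcal{R}^{s/\nu}u(t))(\pi)=\pi(\mathcal{R})^{s/\nu}\widehat{u}(t,\pi)$ by the functional calculus, Plancherel gives
\[
\|u(t)\|_{\dot L^2_s(\mathbb{G})}^2=\int_{\widehat{\mathbb{G}}}\big\|\pi(\mathcal{R})^{s/\nu}\widehat{u}(t,\pi)\big\|_{\textnormal{HS}(\mathcal{H}_\pi)}^2\,{\rm d}\mu(\pi),
\]
so the whole estimate reduces to a pointwise bound (in $\pi$ and in the matrix indices) on the entries of $\pi(\mathcal{R})^{s/\nu}\widehat{u}(t,\pi)$.

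Next I would insert the diagonal structure coming from \eqref{diagon}. The solution satisfies $\widehat{u}(t,\pi)_{ij}=s_{\pi_i^2}(t)\,\widehat{u_0}(\pi)_{ij}$, where $s_\mu$ is the one-dimensional solution of \eqref{1.1} with parameter $\mu=\pi_i^2$, while by \eqref{matrix.repr} the operator $\pi(\mathcal{R})^{s/\nu}$ acts as multiplication by $(\pi_i^2)^{s/\nu}$ on the $(i,j)$ entry. Hence
\[
\big[\pi(\mathcal{R})^{s/\nu}\widehat{u}(t,\pi)\big]_{ij}=(\pi_i^2)^{s/\nu}\,s_{\pi_i^2}(t)\,\widehat{u_0}(\pi)_{ij}.
\]
The decisive algebraic step is to split the exponent as $(\pi_i^2)^{s/\nu}=\pi_i^2\cdot(\pi_i^2)^{(s-\nu)/\nu}$, isolating one full power $\pi_i^2$ and leaving the weight $(\pi_i^2)^{(s-\nu)/\nu}$ that will reconstruct the $\dot L^2_{s-\nu}$ norm of $u_0$.

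The key estimate, and the heart of the argument, is the uniform bound $\mu\,s_\mu(t)\leqslant ((1\ast l)(t))^{-1}$ for all $\mu>0$, which follows at once from the upper bound in Lemma \ref{lemma-decay}: since $s_\mu(t)\leqslant (1+\mu(1\ast l)(t))^{-1}$, one has $\mu s_\mu(t)\leqslant \mu(1+\mu(1\ast l)(t))^{-1}\leqslant ((1\ast l)(t))^{-1}$. Applying this with $\mu=\pi_i^2$ yields
\[
\big|\big[\pi(\mathcal{R})^{s/\nu}\widehat{u}(t,\pi)\big]_{ij}\big|\leqslant \frac{1}{(1\ast l)(t)}\,(\pi_i^2)^{(s-\nu)/\nu}\,\big|\widehat{u_0}(\pi)_{ij}\big|.
\]
Squaring, summing over $i,j$ to form the Hilbert--Schmidt norm, integrating against the Plancherel measure, and using Plancherel once more for $u_0$ then gives
\[
\|u(t)\|_{\dot L^2_s(\mathbb{G})}^2\leqslant \frac{1}{((1\ast l)(t))^2}\int_{\widehat{\mathbb{G}}}\big\|\pi(\mathcal{R})^{(s-\nu)/\nu}\widehat{u_0}(\pi)\big\|_{\textnormal{HS}(\mathcal{H}_\pi)}^2\,{\rm d}\mu(\pi)=\frac{\|u_0\|_{\dot L^2_{s-\nu}(\mathbb{G})}^2}{((1\ast l)(t))^2},
\]
and taking square roots produces the claimed decay.

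The main obstacle is conceptual rather than computational: one must recognize that the gain of a full order $\nu$ (the isolated factor $\pi_i^2$) is exactly what the large-$\mu$ decay of $s_\mu$ can absorb, and that near the trivial representation, where $\pi_i^2\to 0$, this weight vanishes. This is precisely what cures the defect noted before the theorem, namely that no time decay is available in the plain $L^2$ norm because the spectrum of $\mathcal{R}$ accumulates at $0$; the homogeneous weight suppresses the trivial-representation contribution and thereby restores genuine decay. On the technical side one should only justify the functional-calculus identity $\mathcal{F}_{\mathbb{G}}(\mathcal{R}^{s/\nu}u)(\pi)=\pi(\mathcal{R})^{s/\nu}\widehat{u}(\pi)$ and the extension of the Plancherel formula \eqref{plan.gr} to the weighted norm defining $\dot L^2_s(\mathbb{G})$, both of which are standard in the graded setting (see \cite[Subsection 4.4.1]{FR16}).
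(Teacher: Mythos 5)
Your proposal is correct and follows essentially the same route as the paper's own proof: both use the diagonal Fourier-side representation $\widehat{u}(t,\pi)_{ij}=\overline{s_{ij}}_{\pi_i^2}(t)\widehat{u_0}(\pi)_{ij}$ from Theorem \ref{integral-thm}, apply the upper bound of Lemma \ref{lemma-decay} in the form $\pi_i^{2s/\nu}s_{\pi_i^2}(t)\lesssim \big((1\ast l)(t)\big)^{-1}\pi_i^{(2s/\nu)-2}$, and then sum over the matrix entries and integrate against the Plancherel measure to reconstruct the $\dot{L}^2_{s-\nu}$ norm of $u_0$. Your write-up is in fact slightly more explicit than the paper's about the functional-calculus identity and the exponent splitting, but the mathematical content is identical.
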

\begin{proof}
From the first part of the proof of Theorem \ref{integral-thm} we have that $$\widehat{u}(t,\pi)_{ij}=\overline{s_{ij}}_{\pi_i^2}(t)\widehat{u_0}(\pi)_{ij}$$ for any $i,j.$ By estimate \eqref{1.3} we obtain
\begin{align*}
\pi_{i}^{2s/\nu}|\widehat{u}(t,\pi)_{i,j}|&\lesssim \frac{\pi_{i}^{2s/\nu}}{1+\pi_{i}^2 (1\ast l)(t)}|\widehat{u}_{0}(\pi)_{i,j}| \\
&\lesssim \big((1\ast l)(t)\big)^{-1}\pi_{i}^{\big(2s/\nu\big)-2}|\widehat{u}_{0}(\pi)_{i,j}|,\quad \text{for all} \quad t>0.
\end{align*}
Thus, after summation over $i,j$, integration over $\widehat{\mathbb{G}}$ with respect to the Plancherel measure and using the Plancherel formula \eqref{plan.gr} we finally get 
\[
\|u(t)\|_{\dot{L}^2_s(\mathbb{G})}\lesssim \big((1\ast l)(t)\big)^{-1}\|u_0\|_{\dot{L}^2_{s-\nu}(\mathbb{G})}, \quad \text{for all} \quad t>0,
\]
completing the proof.
\end{proof}
We finish this section with a regularity result for equation \eqref{1} in the Sobolev space.

\begin{theorem}\label{integral-thm-1-sobolev}
Let $(k,l)\in \mathcal{PC}$ be such that $l$ is positive and $l\in BV_{loc}(\mathbb{R}_+).$ Let $\mathbb{G}$ be a graded Lie group of homogeneous dimension $Q$. Let $\mathcal{R}$ be a positive Rockland operator of homogeneous degree $\nu$ on $\mathbb{G}$. For any $s\geqslant v$, if $u_0\in \dot{L}^2_{s-\nu}(\mathbb{G})$ and $\bigg(\frac{l}{(1\ast l)(\cdot)}\ast f\bigg)(t)\in \dot{L}^2_{s-\nu}(\mathbb{G})$ then the solution of the integro-differential difussion equation \eqref{1}-\eqref{2} belongs to $\dot{L}^2_s(\mathbb{G})$ and satisfies
\[
\|u(t)\|_{\dot{L}^2_s(\mathbb{G})}\lesssim \bigg((1\ast l)(t)\bigg)^{-1}\|u_0\|_{\dot{L}^2_{s-\nu}(\mathbb{G})}+\int_0^t\left\|\bigg(\frac{l}{(1\ast l)(\cdot)}\ast f\bigg)(s)\right\|_{\dot{L}^2_{s-\nu}(\mathbb{G})}{\rm d}s.
\]
\end{theorem}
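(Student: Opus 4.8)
The plan is to mirror the proof of Theorem~\ref{integral-thm-sobolev}, upgrading it to carry the source term $f$. First I would apply the group Fourier transform to the full equation \eqref{1} and, using the infinitesimal representation of $\pi(\mathcal{R})$ exactly as in the proof of Theorem~\ref{integral-thm}, decouple it into the scalar family
\[
\partial_t\big(k\ast[\widehat{u}(t,\pi)_{ij}-\widehat{u_0}(\pi)_{ij}]\big)+\pi_i^2\,\widehat{u}(t,\pi)_{ij}=\widehat{f}(t,\pi)_{ij},
\]
indexed by $(i,j)$ and $\pi\in\widehat{\mathbb{G}}$. Writing $\mu=\pi_i^2$ and invoking $(k,l)\in\mathcal{PC}$, each scalar equation becomes the Volterra equation $v+\mu(l\ast v)=v_0+(l\ast\widehat{f}_{ij})$, whose resolvent is the function $s_\mu$ of Subsection~\ref{oned}. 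Since $l\in BV_{loc}(\mathbb{R}_+)$ the resolvent is differentiable, so $r_\mu:=-s_\mu'$ is well defined and nonnegative, and the variation-of-constants formula (together with $l-r_\mu\ast l=\tfrac1\mu r_\mu$, which follows from $r_\mu+\mu l\ast r_\mu=\mu l$) gives
\[
\widehat{u}(t,\pi)_{ij}=s_\mu(t)\,\widehat{u_0}(\pi)_{ij}+\tfrac1\mu\,(r_\mu\ast\widehat{f}(\cdot,\pi)_{ij})(t),
\]
the first summand being precisely the homogeneous solution already treated in Theorem~\ref{integral-thm-sobolev}.

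For the homogeneous contribution I would repeat verbatim the argument of Theorem~\ref{integral-thm-sobolev}: multiply by the Sobolev weight $\pi_i^{2s/\nu}=\mu^{s/\nu}$, apply the upper bound $s_\mu(t)\le(1+\mu(1\ast l)(t))^{-1}$ from \eqref{1.3} to obtain $\mu^{s/\nu}s_\mu(t)\le((1\ast l)(t))^{-1}\mu^{(s-\nu)/\nu}$ (the hypothesis $s\geqslant\nu$ guaranteeing the exponent $(s-\nu)/\nu\geqslant0$), then sum over $i,j$, integrate over $\widehat{\mathbb{G}}$ against the Plancherel measure and invoke \eqref{plan.gr}. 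This produces the first term $((1\ast l)(t))^{-1}\|u_0\|_{\dot{L}^2_{s-\nu}(\mathbb{G})}$ of the claimed estimate.

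The heart of the matter is the Duhamel term. Here I would transfer the Sobolev weight onto the source, using $\mu^{s/\nu}\cdot\tfrac1\mu=\mu^{(s-\nu)/\nu}$ so that $\mu^{(s-\nu)/\nu}\widehat{f}$ is the Fourier side of $\mathcal{R}^{(s-\nu)/\nu}f$, i.e. the $\dot{L}^2_{s-\nu}$-datum. The kernel $\tfrac{l}{1\ast l}$ is then suggested by the identity $\tfrac{d}{dt}(1\ast l)=l$ combined with the differential inequality
\[
-\frac{d}{dt}\frac{1}{1+\mu(1\ast l)(t)}=\frac{\mu\,l(t)}{(1+\mu(1\ast l)(t))^2}\le\frac{l(t)}{(1\ast l)(t)}\cdot\frac{1}{1+\mu(1\ast l)(t)},
\]
where the last step uses $\mu(1\ast l)(1+\mu(1\ast l))^{-1}\le1$. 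Comparing $s_\mu$ with $(1+\mu(1\ast l))^{-1}$ through Lemma~\ref{lemma-decay} indicates that the relevant Duhamel kernel is $\tfrac{l}{1\ast l}$; making this rigorous requires estimating the whole convolution rather than the kernel pointwise, after which Minkowski's integral inequality in the time variable and Plancherel collapse the term into $\int_0^t\big\|(\tfrac{l}{(1\ast l)(\cdot)}\ast f)(\tau)\big\|_{\dot{L}^2_{s-\nu}(\mathbb{G})}\,{\rm d}\tau$, finite by the standing hypothesis $\tfrac{l}{(1\ast l)(\cdot)}\ast f\in\dot{L}^2_{s-\nu}(\mathbb{G})$.

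I expect this Duhamel estimate to be the main obstacle. The difficulty is structural: the resolvent kernel $r_\mu$ genuinely depends on the spectral parameter $\mu$, whereas the target kernel $\tfrac{l}{1\ast l}$ does not, so a naive uniform pointwise bound $\tfrac1\mu r_\mu\le C\tfrac{l}{1\ast l}$ cannot hold, and the comparison only becomes admissible once the weight has been moved onto $f$ and the convolution is controlled globally. Two hypotheses are doing the decisive work and must be handled with care: $l\in BV_{loc}(\mathbb{R}_+)$, which makes $s_\mu$ differentiable so that $r_\mu=-s_\mu'$ is meaningful, and $\tfrac{l}{(1\ast l)(\cdot)}\ast f\in\dot{L}^2_{s-\nu}(\mathbb{G})$, which is what renders the (possibly singular, since $l/(1\ast l)$ need not be integrable at the origin) Duhamel term finite and closes the argument.
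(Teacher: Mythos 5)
Your strategy is the paper's: Fourier-decouple \eqref{1} into scalar Volterra problems with $\mu=\pi_i^2$, represent the solution as $s_\mu(t)\widehat{u_0}(\pi)_{ij}$ plus a Duhamel convolution against a resolvent kernel, handle the homogeneous part exactly as in Theorem \ref{integral-thm-sobolev}, and move one factor $\mu^{-1}$ of the Sobolev weight onto the source so that the kernel $l/(1\ast l)$ appears. (Your normalization $r_\mu=-s_\mu'$ is $\mu$ times the standard integral resolvent solving $r_\mu+\mu(l\ast r_\mu)=l$; the paper obtains the same representation by citing Lemmas 5.1 and 5.3 of Pozo--Vergara \cite{[15]}.)

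The gap sits precisely at the step you flag as the main obstacle, and you do not close it. What the argument needs is the pointwise bound $-s_\mu'(h)\leqslant l(h)/(1\ast l)(h)$ for a.e.\ $h$, uniformly in $\mu>0$ --- equivalently $r_\mu(h)\leqslant l(h)/\big(1+\mu(1\ast l)(h)\big)$ in the standard normalization. Your differential inequality differentiates the model function $\big(1+\mu(1\ast l)(t)\big)^{-1}$, not $s_\mu$ itself, and Lemma \ref{lemma-decay} cannot bridge this: a two-sided envelope for a nonincreasing function gives no pointwise control of its derivative. Your proposed repair (``estimating the whole convolution rather than the kernel pointwise,'' then Minkowski) is never actually performed, and Minkowski alone cannot do it, because the $\mu$-dependent kernel must be replaced by a $\mu$-independent one \emph{before} the weight transfer onto $f$ makes sense. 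Moreover, your structural diagnosis is backwards: the uniform pointwise bound you assert ``cannot hold'' is exactly what is true and what the paper uses. Formula (5.9) of Pozo--Vergara \cite{[15]} gives $0\leqslant r_\mu(t)\leqslant l(t)\,s_\mu(t)\leqslant l(t)/\big(1+\mu(1\ast l)(t)\big)$ (it follows from $r_\mu+\mu(l\ast r_\mu)=l$, the monotonicity of $l$, the positivity of $r_\mu$, the identity $\mu(1\ast r_\mu)=1-s_\mu$, and \eqref{1.3}), whence $\mu\,r_\mu(t)\leqslant l(t)/(1\ast l)(t)$ uniformly in $\mu$ --- precisely the estimate your heuristic predicted. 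With this lemma inserted your argument closes and coincides with the paper's proof; without it (or an equivalent substitute), the decisive inequality is unproven.
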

\begin{proof}
Applying the Fourier transform to equation \eqref{1} and taking the components of the infinite system we have 
\begin{align*}
\begin{split}
\partial_t\left(k\ast\left[\widehat{u}(t,\pi)_{ij}-\widehat{u_0}(\pi)_{ij}\right]\right)+\pi_{i}^{2}\widehat{u}(t,\pi)_{ij}&=\widehat{f}(t,\pi)_{ij},\quad \widehat{u}(t,\pi)_{ij}|_{_{_{t=0}}}=\widehat{u_0}(\pi)_{ij};\quad t>0,
\end{split}
\end{align*}
where we are considering any $i,j\in\mathbb{N}$ and any $\pi\in \widehat{\mathbb{G}}.$ By \cite[Lemma
5.1 and Lemma 5.3]{[15]}, the solution is given by 
\[
\widehat{u}(t,\pi)_{ij}=\overline{s_{ij}}_{\pi_{i}^2}(t)\widehat{u_0}(\pi)_{ij}+\int_0^t \widehat{f}(t-h,\pi)_{ij}\overline{r_{ij}}_{\pi_{i}^2}(h){\rm d}h,
\]
where $\overline{r_{ij}}_{\pi_{i}^2}$ is a nonnegative and nonincreasing function. Moreover, by estimate \eqref{1.3} and \cite[Formula (5.9)]{[15]} we arrive at
\begin{align*}
\pi_{i}^{2s/\nu}|\widehat{u}(t,\pi)_{i,j}|&\lesssim \frac{\pi_{i}^{2s/\nu}}{1+\pi_{i}^2 (1\ast l)(t)}|\widehat{u}_{0}(\pi)_{i,j}|+\pi_{ij}^{2s/\nu}\int_0^t \widehat{f}(t-h,\pi)_{ij}\frac{l(h)}{1+\pi_{i}^2 (1\ast l)(h)}{\rm d}h \\
&\lesssim \frac{\pi_{ij}^{\big(2s/\nu\big)-2}}{(1\ast l)(t)}|\widehat{u}_{0}(\pi)_{i,j}|+\pi_{ij}^{\big(2s/\nu\big)-2}\int_0^t \widehat{f}(t-h,\pi)_{ij}\frac{l(h)}{(1\ast l)(h)}{\rm d}h.
\end{align*}
From the above inequality we can then get the desired result.
\end{proof}

\subsection{Remarks about some other similar type of equations}

\begin{remark}
Notice that same techniques and tools as those used in this section can be applied to get similar results for the following generalized subdiffusion equation:
\begin{equation}\label{sub-di}
\partial_t u-\partial_t(l\ast \mathcal{R}u)(t)=f,\quad t>0,    
\end{equation}
with initial data $u_0.$

These type of equations were studied in \cite{otro}. We first highlight that the homogeneous equation \eqref{sub-di} is equivalent to \eqref{integral-equivalent}. Therefore, we do not consider now this case. Let us then focus in the non homogeneous case, i.e. equation \eqref{sub-di}. Note now that equation \eqref{sub-di} can be rewritten as 
\begin{equation}\label{integral-equivalent-gen-2}
u(t)-u_0+(l\ast\mathcal{R}u)(t)=\int_0^t f(s){\rm d}s.    
\end{equation}
So, equation \eqref{integral-equivalent-gen-2} is similar to \eqref{integral-equivalent}. The only difference is the source function involved. Thus, we give the statement without proof since it is the same as given in Theorem \ref{integral-thm-1}.

\begin{theorem}\label{integral-thm-2}
Let $(k, l)\in \mathcal{PC}$ and $l\in BV_{loc}(\mathbb{R}_+).$ Let $\mathbb{G}$ be a graded Lie group of homogeneous dimension $Q$. Let $\mathcal{R}$ be a positive Rockland operator of homogeneous degree $\nu$ on $\mathbb{G}$. If $u_0$ and $f$ are continuous functions in $L^p(\mathbb{G})$ $(1<p<+\infty)$ then the integral equation \eqref{integral-equivalent-gen-2} is well-posed. Also, if $u_0\in L^p(\mathbb{G})$ $(1<p\leqslant 2)$ and $f\in L^{\infty}([0,T],L^p(\mathbb{G}))$ then the following mild solution of \eqref{integral-equivalent-gen-2}
\[
u(t)=S(t)u_0+\int_0^t S(t-r)f(s){\rm d}s
\]
is in 
$L^q(\mathbb{G})$ $(2\leqslant q<+\infty).$ Moreover, we get the following time-rate for the latter solution: 
\begin{align*}
\|u(t)\|_{L^q(\mathbb{G})}\lesssim &\bigg((1\ast l)(t)\bigg)^{-Q/\nu\left(\frac{1}{p}-\frac{1}{q}\right)}\|u_0\|_{L^p(\mathbb{G})} \\
&+\| f\|_{L^{\infty}([0,T],L^p(\mathbb{G}))}\int_0^t\bigg((1\ast l)(s)\bigg)^{-Q/\nu\left(\frac{1}{p}-\frac{1}{q}\right)}{\rm d}s,\quad \frac{\nu}{Q}\geqslant\frac{1}{p}-\frac{1}{q}.   
\end{align*}
\end{theorem}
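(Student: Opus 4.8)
The plan is to reduce Theorem \ref{integral-thm-2} entirely to the already-proven Theorem \ref{integral-thm-1}, exploiting the structural observation made in the surrounding remark: equation \eqref{sub-di} can be rewritten as \eqref{integral-equivalent-gen-2}, which differs from \eqref{integral-equivalent-gen} only in that the right-hand source $(l\ast f)(t)$ appearing in \eqref{integral-equivalent-gen} is now replaced by the plain antiderivative $(1\ast f)(t)=\int_0^t f(s)\,{\rm d}s$. Since the abstract machinery from \cite{Prus} (resolvent existence via \cite[Theorem 4.2]{Prus} and \cite[Corollary 4.2.9]{FR16}, well-posedness via \cite[Proposition 1.1]{Prus}) depends only on the operator $\mathcal{R}$, the kernel $l$, and the continuity of the data — not on the particular form of the forcing term — the well-posedness assertion carries over verbatim once we know $u_0$ and $1\ast f$ are continuous in $L^p(\mathbb{G})$; and $1\ast f\in C([0,T];L^p(\mathbb{G}))$ follows immediately from $f\in C([0,T];L^p(\mathbb{G}))$ by continuity of the Bochner integral.

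For the representation of the mild solution, I would invoke \cite[Proposition 1.2, item (ii)]{Prus} exactly as in the proof of Theorem \ref{integral-thm-1}, with the role of $(l\ast f)'$ now played by $(1\ast f)'=f$. Concretely, the forcing term $1\ast f$ lies in $W^{1,1}([0,T];L^p(\mathbb{G}))$ because its distributional derivative is $f\in C([0,T];L^p(\mathbb{G}))\subset L^1([0,T];L^p(\mathbb{G}))$, so Prüss's variation-of-parameters formula yields
\[
u(t)=S(t)u_0+\int_0^t S(t-r)f(s)\,{\rm d}s,
\]
which is precisely the claimed mild solution (the derivative of the source differentiates the convolution away, returning $f$ itself rather than $(l\ast f)'$).

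The $L^p(\mathbb{G})$–$L^q(\mathbb{G})$ estimate is then obtained by repeating the chain of inequalities \eqref{volterra-sol} from the proof of Theorem \ref{integral-thm}: applying \cite[Theorem 5.1]{RR2020}, \cite[Theorem 1.1]{SRR} and Lemma \ref{lemma-decay} gives
\[
\|u(t)\|_{L^q(\mathbb{G})}\leqslant C\Big(\|u_0\|_{L^p(\mathbb{G})}\|S(t)\|_{L^{r,\infty}}+\int_0^t\|S(t-r)\|_{L^{r,\infty}}\|f(s)\|_{L^p(\mathbb{G})}\,{\rm d}s\Big),
\]
with $\tfrac{1}{r}=\tfrac1p-\tfrac1q$, after which the bound $\tau(E_{(0,v)}(\mathcal{R}))\lesssim v^{Q/\nu}$ from \cite[Theorem 8.2]{david} together with the supremum computation $\sup_{v>0}\tfrac{v^{\lambda/r}}{1+v(1\ast l)(s)}\lesssim ((1\ast l)(s))^{-\lambda/r}$ (valid under $\tfrac{\nu}{Q}\geqslant\tfrac1p-\tfrac1q$) converts each propagator norm into the stated decay factor. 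Finally, pulling $\|f\|_{L^\infty([0,T];L^p(\mathbb{G}))}$ out of the integral over the source term produces exactly the advertised inequality.

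I do not anticipate any genuine obstacle here, which is consistent with the authors' decision to omit the proof: the only point meriting a word of care is confirming that $1\ast f$ genuinely satisfies the $W^{1,1}$ hypothesis of Theorem \ref{integral-thm-1} (it does, trivially, since its weak derivative is the continuous function $f$), and that the change of forcing term does not disturb the resolvent-level arguments (it does not, since $S(t)$ is intrinsic to the pair $(\mathcal{R},l)$). The essential mathematical content — the spectral-projection trace estimate and the propagator decay — is inherited unchanged from the earlier theorems, so the entire statement reduces to a bookkeeping substitution $f\mapsto 1\ast f$ in the source data.
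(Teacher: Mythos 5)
Your proposal is correct and follows exactly the route the paper intends: the authors omit the proof precisely because, as you observe, \eqref{integral-equivalent-gen-2} is \eqref{integral-equivalent-gen} with $l\ast f$ replaced by $1\ast f$, so the Pr\"uss variation-of-parameters formula (with $(1\ast f)'=f$) and the propagator estimates from the proofs of Theorems \ref{integral-thm} and \ref{integral-thm-1} apply verbatim. Your added care in checking that $1\ast f\in W^{1,1}([0,T];L^p(\mathbb{G}))$ and that the resolvent $S(t)$ depends only on the pair $(\mathcal{R},l)$ is exactly the bookkeeping the paper leaves implicit.
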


\end{remark}

\section{Examples of Rockland operators}\label{roc}

\subsection{Euclidean space}
Let $\mathbb{G} = \mathbb{R}^n.$ Then the Rockland operator $\mathcal{R}$ in $\mathbb{R}^n$ may be any positive homogeneous elliptic operators with constant coefficients. For example:
\begin{itemize}
    \item Laplace operator $$-\Delta=-\sum\limits_{j=1}^n\frac{\partial^2}{\partial x^2_j};$$
    \item Poly-harmonic operator $$(-\Delta)^m=\underbrace{(-\Delta)\cdot\cdots\cdot(-\Delta)}_{m},\quad,m\in\mathbb{N};$$
    \item Higher order elliptic operator
$$(-1)^m\sum\limits_{j=1}^na_j\left(\frac{\partial}{\partial x_j}\right)^{2m},$$ where $a_j>0$ and $m\in\mathbb{N}.$
\end{itemize}
\subsection{Heisenberg group $\mathbb{H}^n$.} The Heisenberg group $\mathbb{H}^n$ is the manifold $\mathbb{R}^{2n+1}$ endowed with the composition law
\[(x,y,t)(x',y',t'):=(x+x',y+y',t+t'+\frac{1}{2}(xy'-x'y))\,,\]
where $(x,y,t),(x',y',t') \in \mathbb{H}^n$. The canonical basis of its Lie algebra is given by the (left-invariant) vector fields
        \[
        X_j=\partial_{x_j}-\frac{y_j}{2}\partial_t\,,\quad Y_j=\partial_{y_j}+\frac{x_j}{2}\partial_t\,,\quad \text{and}\quad T=\partial_t\,.
        \]
       \begin{itemize}
       \item The sub-Laplacian in this setting is the simplest example of a Rockland operator and is given by
        \[
        -\Delta_{\mathbb{H}}=-\sum_{j=1}^{n}(X_{j}^{2}+Y_{j}^{2})\,.
        \]
       \item  More generally a positive Rockland operator in this setting is given by 
        \[
        \mathcal{R}=\sum_{i=1}^{2n+1}(-1)^{\frac{\nu_0}{\nu_i}}c_i A_{i}^{2\frac{\nu_0}{\nu_i}}\,,\quad c_i>0\,,
        \]
        where $\nu_i \in \{1,2\}$, $\nu_0$ is an even number, and $A_i \in \{X_j,Y_j,T \} $, for all $j=1,\cdots, n$ and $i=1,\cdots, 2n+1$. For the homogeneous dimension $Q$ we have $Q=2n+2$.
\end{itemize}

\subsection{Engel group $\mathcal{B}_4$.} The Engel group $\mathcal{B}^4$ is the manifold $\mathbb{R}^{4}$ endowed with the composition law
       \begin{eqnarray*}
\lefteqn{(x_1,x_2,x_3,x_4) \times (y_1,y_2,y_3,y_4)}\\
&:=& (x_1+y_1,x_2+y_2,x_3+y_3-x_1y_2,x_4+y_4+\frac{1}{2}x_1^2y_2-x_1y_3)\,.
\end{eqnarray*}
         The canonical basis of its Lie algebra is given by the (left-invariant) vector fields
      \begin{equation*}\label{left.inv.eng}
\begin{split}
X_1(x)&= \frac{\partial}{\partial x_1}\,, \quad X_2(x)=\frac{\partial}{\partial x_2}-x_1 \frac{\partial}{\partial x_3}+ \frac{x^{2}_{1}}{2}\frac{\partial}{\partial x_4}\,, \\
            X_3(x)&= \frac{\partial}{\partial x_3}-x_1 \frac{\partial}{\partial x_4}\,,\quad X_4(x)= \frac{\partial}{\partial x_4}\,,
\end{split}
\end{equation*}
for $x=(x_1,x_2,x_3,x_4) \in \mathcal{B}^4=\mathbb{R}^4$. 
\begin{itemize}
    \item The sub-Laplacian in this setting is the simplest example of a Rockland operator and is given by
        \[
        -\Delta_{\mathcal{B}}=-\sum_{j=1}^{n}(X_{1}^{2}+X_{2}^{2})\,.
        \]
    \item More generally a positive Rockland operator in this setting is given by 
        \[
        \mathcal{R}=\sum_{i=1}^{4}(-1)^{\frac{\nu_0}{\nu_i}}c_i X_{i}^{2\frac{\nu_0}{\nu_i}}\,,\quad c_i>0,
        \]
        where $\nu_i \in \{1,2,3\}$ and $\nu_0$ is an even number that is a multiple of $3$.  For the homogeneous dimension $Q$ we have $Q=7$.
\end{itemize}

\section{Particular cases and comparisons with known results}\label{comparison}

Below we denote by $g_{\mu}(t)=\frac{t^{\mu-1}}{\Gamma(\mu)},$ for $t,\mu>0$ where $\Gamma(\cdot)$ is the Euler gamma function.

\subsection{Time-fractional diffusion equations} The most classical example of a Sonine pair $(k,l)$ is $k(t)=g_{1-\alpha}(t)$ and $l(t)=g_{\alpha}(t)$ with $0<\alpha<1$. Notice now that $\partial_t (k\ast u)(t)$ coincides with the Riemann-Liouville time-fractional derivative $$D^\alpha_tu(t):=\partial_t (g_{1-\alpha}\ast u)(t)=\frac{1}{\Gamma(1-\alpha)}\frac{\rm d}{\rm{d}t}\int\limits_0^t(t-s)^{-\alpha}u(s){\rm d}s$$ of order $\alpha\in(0,1)$. Hence, equation \eqref{1} $(f\equiv0)$ goes over to the following time-fractional diffusion equation
\begin{equation}\label{1-1}
D^\alpha_t\left(u(t)-u_0\right)+\mathcal{R}u(t)=0,\quad t>0,\quad u_0\in L^p(\mathbb{G}).
\end{equation}
Then, according to Theorem \ref{integral-thm}, the solution of problem \eqref{1-1} with condition \eqref{2} satisfies
\begin{align*}
\|u(t)\|_{L^q(\mathbb{G})}\lesssim &t^{-\frac{\alpha Q}{\nu}\left(\frac{1}{p}-\frac{1}{q}\right)}\|u_0\|_{L^p(\mathbb{G})},\quad \frac{\nu}{Q}\geqslant\frac{1}{p}-\frac{1}{q}.  
\end{align*}
Then the above $L^p(\mathbb{G})\,-\,L^q(\mathbb{G})$ estimate coincides with the estimate obtained in \cite[Theorem 3.3]{SRR} for the problem \eqref{1-1}. 

Also, when $\mathbb{G}=\mathbb{R}^n$ and $\mathcal{R}=-\Delta$, then the estimate above coincides with the estimate
\begin{align*}
\|u(t)\|_{L^q(\mathbb{R}^n)}\lesssim &t^{- \frac{\alpha n}{2}\left(\frac{1}{p}-\frac{1}{q}\right)}\|u_0\|_{L^p(\mathbb{R}^n)},\quad \frac{2}{n}\geqslant\frac{1}{p}-\frac{1}{q},  
\end{align*}
obtained in \cite[Theorem 3.3]{Zacher1}.

In the case of the Heisenberg group, that is, if $\mathbb{G}=\mathbb{H}^n$ and $\mathcal{R}=-\Delta_{\mathbb{H}^n}$, the estimate \eqref{lp} has the form
\begin{align*}
\|u(t)\|_{L^q(\mathbb{H}^n)}\lesssim &t^{-\frac{\alpha Q}{2}\left(\frac{1}{p}-\frac{1}{q}\right)}\|u_0\|_{L^p(\mathbb{H}^n)},\quad \frac{2}{Q}\geqslant\frac{1}{p}-\frac{1}{q},  \end{align*}
where $-\Delta_{\mathbb{H}^n}$ is the sub-Laplacian on $\mathbb{H}^n$ and $Q=2n+2$ is the homogeneous dimension of the Heisenberg group $\mathbb{H}^n$.

\subsection{Two term time-fractional diffusion equations} Let
$$k(t)=g_{1-\beta+\alpha}(t)+g_{1-\beta}(t),\quad l(t)=t^{\beta-1}E_{\alpha,\beta}(-t^\alpha),\quad 0<\alpha<\beta<1,$$ where $E_{\alpha,\beta}(y)=\sum\limits_{k=0}^{+\infty}\frac{(-y)^{k}}{\Gamma(\alpha k+\beta)}$ is the two parametric Mittag-Leffler function. Since 
\[
(1\ast l)(t)=\int\limits_0^ts^{\beta-1}E_{\alpha,\beta}(-s^\alpha){\rm d}s=t^{\beta}E_{\alpha,\beta+1}(-t^\alpha),
\]
Theorem \ref{integral-thm} implies that the solution $u(t)$ of the following equation 
\[
D^{\beta}_t\left(u(t)-u_0\right)+D^{\beta-\alpha}_t\left(u(t)-u_0\right)+\mathcal{R}u(t)=0,\quad t>0,\quad u_0\in L^p(\mathbb{G}),
\]
satisfies that 
\[
\|u(t)\|_{L^q(\mathbb{G})}\lesssim \,\big(t^{\beta}E_{\alpha,\beta+1}(-t^\alpha)\big)^{-\frac{Q}{\nu}\left(\frac{1}{p}-\frac{1}{q}\right)}\|u_0\|_{L^p(\mathbb{G})},\quad t>0.
\]

\subsection{Distributed order diffusion equations}
Let $(k,l)$ be defined as follows $$k(t)=\int\limits_0^1g_\alpha(t){\rm d}\alpha,\quad  l(t)=\int\limits_0^{+\infty} \frac{e^{-st}}{1+s}{\rm d}s,\quad t>0.$$
In this case $\partial_t (k\ast u)(t)$ coincides with the distributed order derivative $$\mathcal{D}^{(\mu)}_tu(t)=\int\limits_0^1(D^\alpha u(t))\mu(\alpha){\rm d}\alpha,$$ where the non-negative weight function $\mu \in C[0, 1]$ does not vanish on the interval $[0, 1].$ Let us now consider the following ultraslow (distributed order) diffusion equation:
\begin{equation*}
\mathcal{D}^{(\mu)}_t\left(u(t)-u_0\right)+\mathcal{R}u(t)=0,\quad t>0,\quad u_0\in L^p(\mathbb{G}).
\end{equation*}
Therefore, for the solution of equation above, the estimate \eqref{lp} takes the form 
\begin{align}\label{lp_d-1}
\|u(t)\|_{L^q(\mathbb{G})}\lesssim &\bigg(\int_0^t \left[\int\limits_0^{+\infty} \frac{e^{-sh}}{1+s}{\rm d}s\right]{\rm d}h\bigg)^{-\frac{Q}{\nu}\left(\frac{1}{p}-\frac{1}{q}\right)}\|u_0\|_{L^p(\mathbb{G})}, \nonumber\\
=&\bigg(\int_0^t e^{h}Ei(-h){\rm d}h\bigg)^{-\frac{Q}{\nu}\left(\frac{1}{p}-\frac{1}{q}\right)}\|u_0\|_{L^p(\mathbb{G})},\quad t>0,
\end{align}
where $Ei(h)$ is the exponential integral function. As far as we know, estimate \eqref{lp_d-1} gives a new result even in case $\mathbb{G}=\mathbb{R}^n$.

\subsection{Multi-term time-fractional diffusion equation}
Let $$k(t)=\sum\limits_{j=1}^mg_{1-\alpha_j}(t),\quad 0<\alpha_m<\cdots<\alpha_2<\alpha_1<1.$$
Then equation \eqref{1} is reduced to the multi-term time-fractional diffusion equation
\begin{equation}\label{1mt}
\sum\limits_{j=1}^m D^{\alpha_j}_t\left(u(t)-u_0\right)+\mathcal{R}u(t)=0,\quad t>0,\quad u_0\in L^p(\mathbb{G}).
\end{equation}
Note that the equation \eqref{1mt} with non-local time conditions was studied in \cite{ruzh}.

The solution of the multi-term time-fractional diffusion equation \eqref{1mt} is represented using the multivariate Mittag-Leffler function 
$$E_{(\alpha_1,...,\alpha_{m}),\beta}(z_1,\ldots,z_{m})
=\sum\limits_{k=0}^{+\infty}\sum\limits_{\begin{array}{l}l_1+\ldots+l_{m}=k,\\
l_1\geqslant 0,\ldots,l_{m}\geqslant 0\end{array}}\frac{k!}{l_1!\ldots l_{n}!}\frac{\prod\limits_{j=1}^{m} z_j^{l_j}}{\Gamma\left(\beta+\sum\limits_{j=1}^{m}\alpha_jl_j\right)}.$$
In \cite[Lemma 3.2]{yamam} it is proven that if $\mu\leqslant |\arg(z_1)|\leqslant \pi,\,\alpha_1\pi/2<\mu<\alpha_1\pi$ and there exists $K > 0$ such that $-K\leqslant z_j<0\,\, (j=2,...,m)$, then there exists a constant $C > 0$ depending only on $\mu,$ $K$, $\alpha_j\,(j=1,...,m)$
and $\beta$ such that
$$|E_{(\alpha_1,\alpha_1-\alpha_2,\dots,\alpha_1-\alpha_{m}),\beta}(z_1,\ldots,z_{m})|\leqslant \frac{C}{1+|z_1|},$$ where $0<\alpha_m<\cdots<\alpha_2<\alpha_1<1$ and $0<\beta<2.$

Unfortunately, this estimate is not suitable for asympototic estimates at large $t$.

Nevertheless, by using the estimate \eqref{lp} we arrive at 
\begin{align*}
\|u(t)\|_{L^q(\mathbb{G})}\lesssim\big(t^{\alpha_1-1}E_{(\alpha_1-\alpha_2,\ldots,\alpha_1-\alpha_m),\alpha_1}(-t^{\alpha_1-\alpha_2},\ldots,-t^{\alpha_1-\alpha_m})\big)^{-\frac{Q}{\nu}\left(\frac{1}{p}-\frac{1}{q}\right)}\|u_0\|_{L^p(\mathbb{G})},
\end{align*}
which gives an estimate of the solution of equation \eqref{1mt}.

This result improves decay estimates from \cite{ruzh}, where decay estimates of local in time solutions were obtained with the time-dependent constants.

\subsection{Tempered fractional diffusion equations}
Let 
$$k(t)=g_{1-\alpha}(t)e^{-\gamma t},\quad l(t)=g_\alpha(t)e^{-\gamma t}+\gamma\int\limits_0^tg_\alpha(s)e^{-\gamma s}{\rm d}s,\quad t>0,$$ where $\alpha\in (0,1),$ $\gamma>0,$ then $$\partial_t (k\ast u)(t)=\frac{1}{\Gamma(1-\alpha)}\partial_t\int\limits_0^t(t-s)^{-\alpha}e^{-\gamma(t-s)}u(s){\rm d}s$$ coincides with the tempered fractional derivative, see e.g. \cite{Vergara1}. Then, the equation \eqref{1} goes over to the tempered fractional diffusion equation
\begin{equation}\label{1-tem}
\partial_t \left[g_{1-\alpha}(t)e^{-\gamma t}\ast \left(u(t)-u_0\right)\right]+\mathcal{R}u(t)=0,\quad t>0,\quad u_0\in L^p(\mathbb{G}).
\end{equation}
Hence, the estimate \eqref{lp} takes the form 
\begin{align*}
\|u(t)\|_{L^q(\mathbb{G})}\lesssim\, &\bigg(\int_0^t g_\alpha(h)e^{-\gamma h}{\rm d}h+\int_0^t\bigg[\gamma\int\limits_0^h g_\alpha(s)e^{-\gamma s}{\rm d}s\bigg]{\rm d}h\bigg)^{-\frac{Q}{\nu}\left(\frac{1}{p}-\frac{1}{q}\right)}\|u_0\|_{L^p(\mathbb{G})},
\end{align*}
which gives the $L^p(\mathbb{G})-L^q(\mathbb{G})$-estimate of solution to the tempered diffusion equation \eqref{1-tem}.

\section{Equations on compact Lie groups}

Note that the ideas and methods in this paper can be applied to study some similar equations on $L^p(\mathrm{G})$ when $\mathrm{G}$ is a compact Lie group. Basic steps, tools and techniques of more simple equations (i.e. $k(t)=t^{-\alpha}$, $0<\alpha<1$, $t\geqslant0$) in a compact Lie group can be found in \cite{wagner}. The type of operators that can be in principle considered are any positive linear left invariant operator acting on $\mathrm{G},$ such that it can be well-defined in $L^p(\mathrm{G})$ having a dense domain there. So, we can study the following equation on $L^p(\mathrm{G})$: 
\begin{equation*}
\left\{\begin{aligned}
		\partial_t\left(k\ast\left[u-u_0\right]\right)&+\mathcal{L}u=f,\quad 0<t\leqslant T<+\infty, \\
	u_0&\in C(\mathrm{G})\cap L^p(\mathrm{G}),
	\end{aligned}
	\right.
\end{equation*}
where $k$ is a scalar kernel $\neq0$ in $L^1_{loc}(\mathbb{R}_+)$ that belongs to the class $\mathcal{PC}$, $\mathcal{L}$ is a positive linear left invariant operator on $\mathrm{G}$ (maybe unbounded) and $f\in C([0,T],L^p(\mathrm{G}))$. Equation above is equivalent to 
\begin{equation}\label{Heat-intro}
\left\{\begin{aligned}
		u-u_0+l\ast\mathcal{L}u&=l\ast f,\quad 0<t\leqslant T<+\infty, \\
	u_0&\in C(\mathrm{G})\cap L^p(\mathrm{G}).
	\end{aligned}
	\right.
\end{equation}

Let us now provide the statements of the analogous results for equation \eqref{Heat-intro}.

\begin{theorem}
Let $(k,l)\in \mathcal{PC}$ be such that $l$ is positive and $l\in BV_{loc}(\mathbb{R}_+).$ Let $\mathrm{G}$ be a compact Lie group. Let $\mathcal{L}$ be a positive linear left invariant operator on $\mathrm{G}$ (maybe unbounded) such that 
	\begin{equation}\label{need}
		\sup_{t>0}\sup_{s>0}\frac{[\tau\big(E_{(0,s)}(\mathcal{L})\big)]^{\frac{1}{p}-\frac{1}{q}}}{1+s(1\ast l)(t)}<+\infty,\quad 1\leqslant p\leqslant 2\leqslant q<+\infty.   
	\end{equation}
If $u_0$ is a continuous function in $L^p(\mathrm{G})$ $(1<p<+\infty)$ then the homogeneous integral equation \eqref{Heat-intro} $(f\equiv0)$ is well-posed. In particular, if for some $\lambda>0$ we have 
\begin{equation}\label{asymtotic-trace}
		\tau\big(E_{(0,s)}(\mathcal{L})\big)\lesssim s^{\lambda},\quad s\to+\infty,
	\end{equation}
	then \eqref{need} is satisfied for any $1<p\leqslant 2\leqslant q<+\infty$ such that $\frac{1}{\lambda}\geqslant\frac{1}{p}-\frac{1}{q}$, and one has the following time decay rate for the solution:  
\begin{equation*}
\|u(t)\|_{L^q(\mathrm{G})}\lesssim \bigg((1\ast l)(t)\bigg)^{-\lambda\left(\frac{1}{p}-\frac{1}{q}\right)}\|u_0\|_{L^p(\mathrm{G})},\quad \frac{1}{\lambda}\geqslant\frac{1}{p}-\frac{1}{q}.    
\end{equation*}
\end{theorem}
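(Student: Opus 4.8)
The plan is to mirror the proof of Theorem \ref{integral-thm}, replacing the group Fourier transform on the graded group by the Peter--Weyl decomposition of the compact group $\mathrm{G}$ and the Rockland operator $\mathcal{R}$ by the positive left-invariant operator $\mathcal{L}$. First I would settle well-posedness: since $\mathcal{L}$ is positive, left-invariant and densely defined on $L^p(\mathrm{G})$, the same combination of \cite[Theorem 4.2]{Prus} with the resolvent-generation argument used in the introduction shows that the homogeneous equation \eqref{Heat-intro} $(f\equiv 0)$ admits an exponentially bounded resolvent, whence \cite[Proposition 1.1]{Prus} gives well-posedness together with the representation $u(t)=S(t)u_0$ for the propagator $S(t)$. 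I emphasize that this part does not use \eqref{need}; it relies only on the positivity and invariance of $\mathcal{L}$ and on $(k,l)\in\mathcal{PC}$ with $l\in BV_{loc}(\mathbb{R}_+)$.

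Next, decoupling on the discrete dual $\widehat{\mathrm{G}}$ exactly as in \eqref{diagon}, the propagator acts as the spectral multiplier $s_\lambda(t)$ of $\mathcal{L}$, where for each point $\lambda$ of the spectrum of $\mathcal{L}$ the scalar $s_\lambda(t)$ solves the one-dimensional equation of Subsection \ref{oned}; in particular $s_\lambda(t)$ is nonnegative, nonincreasing in $\lambda$, and obeys the bound \eqref{1.3} of Lemma \ref{lemma-decay}. I would then invoke the $L^p(\mathrm{G})-L^q(\mathrm{G})$ Fourier (and hence spectral) multiplier estimates \cite[Theorem 5.1]{RR2020} and \cite[Theorem 1.1]{SRR} to obtain
\[
\|u(t)\|_{L^q(\mathrm{G})}\leqslant C\,\|u_0\|_{L^p(\mathrm{G})}\,\|S(t)\|_{L^{r,\infty}(VN_R(\mathrm{G}))},\qquad \tfrac{1}{r}=\tfrac{1}{p}-\tfrac{1}{q}.
\]
Because $s_\lambda(t)$ is nonincreasing in $\lambda$, the Lorentz norm of the propagator is a rearrangement of the spectral distribution of $\mathcal{L}$, and inserting the upper bound in \eqref{1.3} leads to
\[
\|S(t)\|_{L^{r,\infty}(VN_R(\mathrm{G}))}\leqslant \sup_{s>0}\frac{\big[\tau(E_{(0,s)}(\mathcal{L}))\big]^{1/r}}{1+s\,(1\ast l)(t)}.
\]

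Substituting this into the multiplier estimate, the hypothesis \eqref{need} is precisely the statement that the governing supremum is finite, which yields the $L^p-L^q$ boundedness of the solution. For the quantitative rate I would then feed in the spectral asymptotics \eqref{asymtotic-trace}: replacing $\tau(E_{(0,s)}(\mathcal{L}))$ by $s^\lambda$ reduces the supremum to $\sup_{s>0}\frac{s^{\lambda/r}}{1+s(1\ast l)(t)}$, and optimizing over $s$ (the supremum being attained at $s=\frac{\lambda}{(r-\lambda)(1\ast l)(t)}$ when $\frac{1}{\lambda}>\frac{1}{p}-\frac{1}{q}$, and treated directly on the boundary $\frac{1}{\lambda}=\frac{1}{p}-\frac{1}{q}$) gives the claimed decay $\big((1\ast l)(t)\big)^{-\lambda(1/p-1/q)}$; this is the identical computation to the closing lines of the proof of Theorem \ref{integral-thm}, and it is exactly the optimization that turns \eqref{asymtotic-trace} into the finiteness \eqref{need} in the admissible range $\frac{1}{\lambda}\geqslant\frac{1}{p}-\frac{1}{q}$.

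I expect the main obstacle to be the second step, namely transferring the noncommutative $L^p-L^q$ multiplier machinery of \cite{RR2020,SRR} from the graded to the compact setting and confirming that the propagator is genuinely a spectral multiplier of $\mathcal{L}$ whose $L^{r,\infty}(VN_R(\mathrm{G}))$ norm is controlled by the spectral-trace quantity above. Once this identification is secured, the remaining estimates are routine and run in complete parallel to the graded case.
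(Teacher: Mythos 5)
Your proposal is correct and takes essentially the same route the paper intends: the paper gives this compact-group theorem without a separate proof precisely because the argument is that of Theorem \ref{integral-thm} — well-posedness via Pr\"uss's resolvent theory, the bound $\|S(t)\|_{L^{r,\infty}(VN_R(\mathrm{G}))}\leqslant \sup_{s>0}[\tau(E_{(0,s)}(\mathcal{L}))]^{1/r}/(1+s(1\ast l)(t))$ obtained from \cite{RR2020,SRR} together with Lemma \ref{lemma-decay}, and the same optimization in $s$ under \eqref{asymtotic-trace}, with the only structural difference being that the trace condition \eqref{need}/\eqref{asymtotic-trace} is now a hypothesis rather than a consequence of \cite{david}. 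The one point to note is that your anticipated ``main obstacle'' is not one: the multiplier theorems of \cite{RR2020} and \cite{SRR} are stated for general locally compact unimodular groups, so they apply verbatim to compact Lie groups and no transfer argument is needed.
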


\begin{theorem}
Let $(k,l)\in \mathcal{PC}$ be such that $l$ is positive and $l\in BV_{loc}(\mathbb{R}_+).$ Let $\mathrm{G}$ be a compact Lie group. Let $\mathcal{L}$ be a positive linear left invariant operator on $\mathrm{G}$ (maybe unbounded) such that condition \eqref{need} is true. If $u_0$ is a continuous function in $L^p(\mathrm{G})$ $(1<p<+\infty)$, $f\in C([0,T];L^p(\mathrm{G}))$ and $l\ast f\in C([0,T];L^p(\mathrm{G}))$ then the integral equation \eqref{Heat-intro} is well-posed. Also, if the condition \eqref{asymtotic-trace} holds, $u_0\in L^p(\mathrm{G})$ $(1<p\leqslant 2)$ and $l\ast f\in W^{1,1}([0,T];L^p(\mathrm{G}))$ then the following mild solution of \eqref{Heat-intro}
\[
u(t)=S(t)u_0+\int_0^t S(t-r)(l\ast f)^{\prime}(s){\rm d}s
\]
is in 
$L^q(\mathrm{G})$ $(2\leqslant q<+\infty).$ Moreover, we get the following time rate for the latter solution: 
\begin{align*}
\|u(t)\|_{L^q(\mathrm{G})}\lesssim &\bigg((1\ast l)(t)\bigg)^{-\lambda\left(\frac{1}{p}-\frac{1}{q}\right)}\|u_0\|_{L^p(\mathrm{G})} \\
&+\int_0^t\bigg((1\ast l)(s)\bigg)^{-\lambda\left(\frac{1}{p}-\frac{1}{q}\right)}\|(l\ast f^{\prime})(s)\|_{L^p(\mathrm{G})}{\rm d}s,\quad \frac{1}{\lambda}\geqslant\frac{1}{p}-\frac{1}{q}.   \end{align*}
\end{theorem}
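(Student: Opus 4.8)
The plan is to follow verbatim the strategy of the proof of Theorem \ref{integral-thm-1}, replacing the Rockland operator $\mathcal{R}$ by the positive left-invariant operator $\mathcal{L}$ and replacing the trace bound that came from \cite{david} in the graded case by the standing hypothesis \eqref{asymtotic-trace}. First I would establish well-posedness. Since \eqref{Heat-intro} is again a Volterra equation of the form $u-u_0+l\ast\mathcal{L}u=l\ast f$ with the same Sonine pair $(k,l)$, the resolvent-generation argument used for \eqref{integral-equivalent-gen} carries over unchanged: by \cite[Theorem 4.2]{Prus}, together with the fact that a positive left-invariant operator on a compact Lie group generates an exponentially bounded resolvent in $L^p(\mathrm{G})$, equation \eqref{Heat-intro} admits a resolvent $S(t)$, which is differentiable because $l\in BV_{loc}(\mathbb{R}_+)$, and is hence well-posed by \cite[Proposition 1.1]{Prus}.

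Next I would produce the variation-of-parameters formula. Under the assumption $l\ast f\in W^{1,1}([0,T];L^p(\mathrm{G}))$, an application of \cite[Proposition 1.2, item (ii)]{Prus} yields exactly
\[
u(t)=S(t)u_0+\int_0^t S(t-s)(l\ast f)'(s)\,{\rm d}s,
\]
so that the $L^q$-bound is reduced to estimating the propagator $S(t)$ as an operator from $L^p(\mathrm{G})$ to $L^q(\mathrm{G})$.

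The heart of the argument is then the $L^p-L^q$ estimate. As in the graded case, I would invoke \cite[Theorem 5.1]{RR2020} and \cite[Theorem 1.1]{SRR} to dominate the $L^p(\mathrm{G})\to L^q(\mathrm{G})$ norm of the spectral multiplier $S(t)$ by its Lorentz norm $\|S(t)\|_{L^{r,\infty}(VN_R(\mathrm{G}))}$, with $\frac1r=\frac1p-\frac1q$. Lemma \ref{lemma-decay} controls the symbol of $S(t)$ by $(1+v(1\ast l)(t))^{-1}$ along the spectrum, so that
\[
\|S(t)\|_{L^{r,\infty}(VN_R(\mathrm{G}))}\leqslant \sup_{v>0}\frac{\big[\tau(E_{(0,v)}(\mathcal{L}))\big]^{1/r}}{1+v(1\ast l)(t)}.
\]
Here the finiteness assumption \eqref{need} is precisely what guarantees that this quantity is finite, hence that $S(t)$ is $L^p-L^q$ bounded, while the sharp asymptotic \eqref{asymtotic-trace} lets me substitute $\tau(E_{(0,v)}(\mathcal{L}))\lesssim v^{\lambda}$ and optimize in $v>0$ exactly as in Theorem \ref{integral-thm}: the supremum of $v^{\lambda/r}/(1+v(1\ast l)(t))$ is attained at $v=\frac{\lambda}{(r-\lambda)(1\ast l)(t)}$ when $\frac1\lambda>\frac1p-\frac1q$, and is handled directly in the borderline case $\frac1\lambda=\frac1r$, both giving the decay $((1\ast l)(t))^{-\lambda/r}$. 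Applying this bound to the homogeneous term $S(t)u_0$ and inside the Duhamel integral with $S(t-s)$ produces the claimed estimate.

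The main obstacle is not computational, since the optimization in $v$ is identical to Theorem \ref{integral-thm}, but conceptual: one must verify that the Fourier and spectral multiplier theorems of \cite{RR2020,SRR} and the $\tau$-measurability framework on $VN_R(\mathrm{G})$, originally phrased for graded groups, transfer to the compact setting, and that on a compact group the contribution of the trivial representation, where the spectrum of $\mathcal{L}$ reaches $0$, is exactly what condition \eqref{need} is designed to absorb. Once this transfer is justified, the remaining estimates are routine.
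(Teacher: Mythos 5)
Your proposal is correct and is essentially the paper's intended argument: the paper states this compact-group theorem without proof precisely because it is the proof of Theorem \ref{integral-thm-1} repeated verbatim, with $\mathcal{R}$ replaced by $\mathcal{L}$, well-posedness again coming from \cite[Theorem 4.2, Proposition 1.1]{Prus}, the Duhamel formula from \cite[Proposition 1.2, item (ii)]{Prus}, and the trace asymptotics from \cite{david} replaced by the hypotheses \eqref{need} and \eqref{asymtotic-trace} — exactly the substitutions you make, including the identical optimization in $v$. The ``conceptual obstacle'' you flag at the end is in fact no obstacle at all: \cite[Theorem 5.1]{RR2020} and \cite[Theorem 1.1]{SRR} are formulated for general locally compact unimodular groups, so they apply to compact Lie groups directly, and the role of \eqref{need} in absorbing the low-spectrum contribution is exactly as you describe.
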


\begin{remark}
Note that if $\mathcal{L}$ is the Laplacian on a compact Lie group of topological dimension $n$, then $\lambda=n/2$ in \eqref{asymtotic-trace}. While, if $\mathcal{L}$ is the sub-Laplacian on a compact Lie group $\mathrm{G}$, then $\lambda=Q/2$ where $Q$ is the Hausdorff dimension of $\mathrm{G}$ with respect to H\"ormander vectors fields in $\mathcal{L}$, see \cite{RR2020}.     
\end{remark}

\end{document}